\theoremstyle{plain}
\newtheorem{thm}{Theorem}[section]
\newtheorem{prop}[thm]{Proposition}
\newtheorem{lemma}[thm]{Lemma}
\newtheorem{cor}[thm]{Corollary}
\theoremstyle{definition}
\newtheorem{defn}[thm]{Definition}
\newtheorem{rmk}[thm]{Remark}
\numberwithin{equation}{section}
\newcommand{\g}{\mathfrak{g}}
\newcommand{\h}{\mathfrak{h}}
\newcommand{\bb}{\mathfrak{b}}
\newcommand{\n}{\mathfrak{n}}
\newcommand{\gl}{\mathfrak{gl}}
\newcommand{\fsl}{\mathfrak{sl}}
\newcommand{\I}{\mathcal{I}}
\newcommand\cL{\mathcal{L}}
\newcommand{\Z}{\mathbb{Z}}
\newcommand{\C}{\mathbb{C}}
\newcommand{\N}{\mathbb{N}}
\newcommand{\sm}{\mathsf{m}}
\DeclareMathOperator{\Hom}{Hom}
\DeclareMathOperator{\vspan}{span}
\DeclareMathOperator{\MaxSpec}{MaxSpec}
\DeclareMathOperator{\Supp}{Supp}
\DeclareMathOperator{\rank}{rank}
\DeclareMathOperator{\het}{ht}
  \newcommand{\comments}[1]{
    \begin{center}
      \parbox{6.5 in}{
        \color{red}
          {\footnotesize \textbf{Comments:} #1}
        \color{black}}
    \end{center}}
  \newcommand{\comments}[1]{}
  \newcommand{\details}[1]{
      \ \\
      \color{OliveGreen}
        \begin{footnotesize}
          \textbf{Details:} #1
        \end{footnotesize}
      \color{black}
      \\
  }
  \newcommand{\details}[1]{}
\begin{document}
%

\title{Weyl modules for Lie superalgebras}

\author{Lucas Calixto}
\address{L.~Calixto: Department of Mathematics, Federal University of Minas Gerais, Belo Horizonte, Minas Gerais, Brazil}
\urladdr{\url{http://www.mat.ufmg.br/lhcalixto/}}
\email{lhcalixto@ufmg.br}

\author{Joel Lemay}
\address{J.~Lemay: Department of Mathematics and Statistics, University of Ottawa, Canada}
\email{jlema072@uottawa.ca}

\author{Alistair Savage}
\address{A.~Savage: Department of Mathematics and Statistics, University of Ottawa, Canada}
\urladdr{\url{https://alistairsavage.ca}}
\email{alistair.savage@uottawa.ca}

\thanks{L.~Calixto was supported by  FAPESP grant 2013/08430-4.  J.~Lemay was supported by a Natural Sciences and Engineering Research Council of Canada (NSERC) Postgraduate Scholarship.  A.~Savage was supported by an NSERC Discovery Grant.}

\begin{abstract}
  We define global and local Weyl modules for Lie superalgebras of the form $\g \otimes A$, where $A$ is an associative commutative unital $\mathbb{C}$-algebra and $\g$ is a basic Lie superalgebra or $\fsl(n,n)$, $n \ge 2$.  Under some mild assumptions, we prove universality, finite-dimensionality, and tensor product decomposition properties for these modules.  These properties are analogues of those of Weyl modules in the non-super setting.  We also point out some features that are new in the super case.
\end{abstract}

\subjclass[2010]{17B65, 17B10.}
\keywords{Lie superalgebra, basic Lie superalgebra, Weyl module, Kac module, tensor product.}

\maketitle
\thispagestyle{empty}

\setcounter{tocdepth}{1}

\tableofcontents

%
\section{Introduction}
%

Map Lie algebras, also known as generalized current Lie algebras, are Lie algebras of regular maps from a scheme $X$ to a (generally finite-dimensional) target Lie algebra $\g$.  They form a large class of Lie algebras that include the important loop and current algebras as special cases, and their representation theory is an active area of research.  We refer the reader to \cite{NS13} for a survey of the field.  A vital ingredient in the theory is played by global and local Weyl modules, which are universal objects with respect to certain highest weight properties.  The local Weyl modules are finite-dimensional but not, in general, irreducible.  They were first defined, in the loop case, in \cite{CP01} and extended to the map case in \cite{FL04}.

Replacing the target Lie algebra $\g$ by a Lie \emph{super}algebra, we obtain the class of map superalgebras.  The study of these algebras is still in its infancy.  In the loop case, where $X$ is a torus, and when $\g$ is a basic Lie superalgebra, the finite-dimensional modules were classified in \cite{RZ04,Rao13}.  In the more general setting where the coordinate ring of $X$ is finitely generated, $\g$ is a basic Lie superalgebra, and where we also consider maps equivariant with respect to a finite abelian group acting freely on the rational points of $X$, the irreducible finite-dimensional modules were classified in \cite{Sav14}.  However, Weyl modules have not been defined in the super setting, except for a quantum analogue in the loop case for $\g = \fsl(m,n)$ considered in \cite{Zha14}.

In the current paper, we initiate the study of Weyl modules for Lie superalgebras.  In particular, we consider Lie superalgebras of the form $\g \otimes_\C A$, where $A$ is an associative commutative unital $\C$-algebra and $\g$ is a basic Lie superalgebra or $\fsl(n,n)$, $n \ge 2$.  We focus on this class of Lie superalgebras $\g$ since they are contragredient and always possess simple root systems satisfying a certain technical condition (see Proposition~\ref{prop:nice-system-exists}).

We begin by defining global Weyl modules for the Lie superalgebras $\g \otimes_\C A$ (Definition~\ref{def:global-Weyl}).  After giving a presentation for these modules in terms of generators and relations (Proposition~\ref{prop:gen-rel-global}), we prove that they are universal highest weight objects in a certain category (Proposition~\ref{prop:global-Weyl-universal}).  We then define local Weyl modules (Definition~\ref{def:local-Weyl}).

Next, we focus on the case where $A$ is finitely generated and the simple root system used in the definition of the Weyl module satisfies the technical condition mentioned above (condition~\eqref{eq:system-key-propery}).  Under these additional assumptions, we prove that the local Weyl modules are finite-dimensional (Theorem~\ref{thm:fin-dim-loc-weyl}), and that they also satisfy a certain univeral property with respect to so-called highest \emph{map-weight} modules (Proposition~\ref{prop:local-Weyl-universal}). Finally, we show that the local Weyl modules satisfy a nice tensor product property (Theorem~\ref{thm:tensor-property}).

The above-mentioned results demonstrate that the Weyl modules defined in the current paper satisfy many of the properties that their non-super analogues do.  However, there are some important differences.  First of all, the Borel subalgebras of basic Lie superalgebras are not all conjugate under the action of the Weyl group, in contrast to the situation for finite-dimensional simple Lie algebras.  For this reason, our definitions of Weyl modules depend on a choice of system of simple roots.  Second, the category of finite-dimensional modules for a basic Lie superalgebra is not semisimple in general, again in contrast to the non-super setting.  For this reason, the so-called \emph{Kac modules} play an important role in the representation theory.  These are maximal finite-dimensional modules of a given highest weight.  The Weyl modules defined in the current paper can be viewed as a unification of several types modules in the following sense.  If $\g$ is a simple Lie algebra, then our definitions reduce to the usual ones.  Thus, the Weyl modules defined here are generalizations of the Weyl modules in the non-super case.  On the other hand, if $A=\C$, then the global and local Weyl modules are equal and coincide with the (generalized) Kac module, which, if $\g$ is a simple Lie algebra, is the irreducible module (of a given highest weight).  These relationships can be summarized in the following diagram:

\smallskip

\begin{center}
  \begin{tikzpicture}
    \draw (0,3) node {super};
    \draw (7,3) node {non-super};
    \draw[->] (1,3) -- (5.9,3);
    \draw (-4.5,2) node {general $A$};
    \draw (-4.5,0) node {$A=\C$};
    \draw (0,2) node {global/local Weyl (super)module};
    \draw[->] (-4.5,1.6) -- (-4.5,0.4);
    \draw (7,2) node {global/local Weyl module};
    \draw (0,0) node {generalized Kac module};
    \draw (7,0) node {irreducible module};
    \draw [->,line join=round,decorate, decoration={zigzag, segment length=4,amplitude=.9, post=lineto, post length=2pt}] (3,2) -- (4.5,2);
    \draw [->,line join=round,decorate, decoration={zigzag, segment length=4,amplitude=.9, post=lineto, post length=2pt}] (2.5,0) -- (5,0);
    \draw [->,line join=round,decorate, decoration={zigzag, segment length=4,amplitude=.9, post=lineto, post length=2pt}] (0,1.6) -- (0,0.4);
    \draw [->,line join=round,decorate, decoration={zigzag, segment length=4,amplitude=.9, post=lineto, post length=2pt}] (7,1.6) -- (7,0.4);
  \end{tikzpicture}
\end{center}

The definition of global and local Weyl modules for Lie superalgebras opens up a number of directions of possible further research.  We conclude this introduction by listing some of these.
\begin{asparaenum}
  \item One should be able to define Weyl modules when $\g$ is not basic.  For example, in \cite{CMS14}, the finite-dimensional irreducible $\g \otimes A$-modules have been classified in the case that $\g$ is the queer Lie superalgebra.  The nature of the classification (in terms of evaluation modules) seems to indicate that the theory of Weyl modules should be relatively similar to the case considered in the current paper.
  \item \emph{Twisted} versions of Weyl modules have been defined and investigated in the non-super setting (see \cite{CFS08,FMS13,FKKS12,FMS14}).  One should similarly be able to develop a twisted theory of Weyl modules for equivariant map Lie superalgebras.
  \item A categorical approach to Weyl modules was developed in \cite{CFK10}.  It would be interesting to develop this theory in the super setting.  In particular, one should be able to define super analogues of Weyl functors.
  \item Recently, in \cite{SVV14,BHLW14}, local Weyl modules for current algebras have appeared as trace decategorifications of categories used to categorify quantum groups.  It is natural to ask how the super analogues of Weyl modules defined in the current paper are related to the super analogues, defined in \cite{KKT11}, of the afore-mentioned categories.
\end{asparaenum}

\medskip

\paragraph{\textbf{Acknowledgements}}  We would like to thank S.-J.~Cheng and E.~Neher for helpful conversations.

\iftoggle{detailsnote}{
\medskip

\paragraph{\textbf{Note on the arXiv version}} For the interested reader, the tex file of the \href{http://arxiv.org/abs/1505.06949}{arXiv version} of this paper includes hidden details of some straightforward computations and arguments that are omitted in the pdf file.  These details can be displayed by switching the \texttt{details} toggle to true in the tex file and recompiling.
}{}

\medskip

\paragraph{\textbf{Changes relative to published version}}  This preprint version of the paper contains a correction to the published version.  In particular, Definition~\ref{def:updated} and Remark~\ref{rem:ann-even-root} have been modified.

%
\section{Preliminaries}\label{sec:prelim}
%

In this section, we review some facts about associative commutative algebras and Lie superalgebras that will be needed in the sequel.  We also prove some results about simple root systems for basic Lie superalgebras.

\subsection{Commutative algebras}

Let $A$ denote an associative commutative unital $\C$-algebra. We define the \emph{support} of an ideal $I$ of $A$ to be
\[
  \Supp (I) = \{ \sm \in \MaxSpec A \mid I \subseteq \sm\}.
\]

\begin{lemma} \label{lem:assoc-alg-facts}
  Let $I$, $J$ be ideals of $A$.
  \begin{enumerate}
    \item \label{lem-item:support-power} For all positive integers $N$, we have $\Supp(I)=\Supp(I^N)$.
    \item \label{lem-item:finiteCod-and-finiteSupp} If $A$ is finitely generated, then the support of $I$ is finite if and only if $I$ has finite codimension in $A$.
    \item \label{lem-item:product-intersection} Suppose $\Supp(I) \cap \Supp(J) = \varnothing$.  Then $I+J=A$ and $IJ = I\cap J$.
  \end{enumerate}
\end{lemma}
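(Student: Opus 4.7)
The plan is to handle the three parts in order, relying only on basic commutative algebra.

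For part (a), the inclusion $\Supp(I^N) \supseteq \Supp(I)$ is immediate since $I^N \subseteq I$. For the reverse, if $\sm \in \Supp(I^N)$ then $I^N \subseteq \sm$; since $\sm$ is maximal, hence prime, a straightforward induction on $N$ (or the standard fact that $\sqrt{I^N} = \sqrt{I}$) gives $I \subseteq \sm$.

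For part (b), the idea is to identify $\Supp(I)$ with $\MaxSpec(A/I)$ via the correspondence theorem and then invoke the Nullstellensatz. If $I$ has finite codimension, then $A/I$ is a finite-dimensional commutative $\C$-algebra, hence Artinian, so it has only finitely many maximal ideals. Conversely, if $A$ is finitely generated and $\Supp(I)$ is finite, then $A/I$ is a finitely generated $\C$-algebra with finitely many maximal ideals; by the Nullstellensatz each residue field is $\C$, so the Jacobson radical of $A/I$ has finite codimension, and since $A/I$ is also Jacobson (being a finitely generated $\C$-algebra), the nilradical equals the Jacobson radical and is nilpotent — giving $A/I$ a finite filtration with finite-dimensional quotients. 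This step will be the main obstacle: carefully assembling the finite generation, Nullstellensatz, and nilpotence of the nilradical to conclude finite-dimensionality.

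For part (c), first note that if $I+J \neq A$, then $I+J$ is contained in some maximal ideal $\sm$, which would then lie in $\Supp(I)\cap\Supp(J)$, a contradiction; hence $I+J=A$. The inclusion $IJ \subseteq I\cap J$ is automatic. For the reverse, use the Chinese Remainder style trick: write $1 = i + j$ with $i \in I$, $j \in J$, and for any $x \in I \cap J$, observe that $x = xi + xj$ with $xi, xj \in IJ$, so $x \in IJ$.

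Throughout, the only nontrivial ingredient is the Nullstellensatz used in part (b); parts (a) and (c) are routine once one unwinds the definition of support.
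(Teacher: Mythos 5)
Your proposal is correct. Part~(\ref{lem-item:support-power}) is argued exactly as in the paper: the inclusion $\Supp(I)\subseteq\Supp(I^N)$ is immediate from $I^N\subseteq I$, and the reverse uses that maximal ideals are prime. For parts~(\ref{lem-item:finiteCod-and-finiteSupp}) and~(\ref{lem-item:product-intersection}) the paper simply cites \cite[\S 2.1]{Sav14}, whereas you supply complete arguments; these are the standard ones (Nullstellensatz plus the Chinese-remainder trick $1=i+j$), so there is no real divergence in method, only in how much is written out. The one place where you should add a line when fleshing out part~(\ref{lem-item:finiteCod-and-finiteSupp}) is the claim that the filtration of $A/I$ by powers of its Jacobson radical $J$ has finite-dimensional quotients: this uses that $A/I$ is Noetherian (Hilbert basis theorem), so each $J^m$ is a finitely generated ideal and hence $J^m/J^{m+1}$ is a finitely generated module over $(A/I)/J\cong\C^k$. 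With that observation, your assembly of finite generation, the Nullstellensatz, and nilpotence of the nilradical goes through as planned.
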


\begin{proof}
  The proofs of parts \eqref{lem-item:finiteCod-and-finiteSupp} and \eqref{lem-item:product-intersection} can be found in \cite[\S 2.1]{Sav14}.  It remains to prove part \eqref{lem-item:support-power}.  Fix a positive integer $N$.  It is clear that $\Supp(I) \subseteq \Supp (I^N)$.  The reverse inclusion follows from the fact that maximal ideals are prime.
  \details{
    Suppose $\sm$ is a maximal ideal containing $I^N$.  In particular, for all $a \in I$, we have $a^N \in \sm$.  But since maximal ideals are prime ideals, this implies that $a \in \sm$.
  }
\end{proof}

\subsection{Lie superalgebras}

For a $\Z_2$-graded vector space $V_{\bar 0} \oplus V_{\bar 1}$, an element of $v \in V_i$, $i \in \Z_2$, is said to have \emph{parity} $|v| = i$.  Vectors of parity $\bar 0$ are said to be \emph{even} and those of parity $\bar 1$ are said to be \emph{odd}.

\begin{defn}[Lie superalgebra]
  A \emph{Lie superalgebra} is a $\Z_2$-graded vector space $\g=\g_{\bar 0}\oplus \g_{\bar 1}$ with a bilinear multiplication $[\cdot,\cdot]$ satisfying the following axioms:
  \begin{enumerate}
    \item The multiplication respects the grading: $[\g_i,\g_j] \subseteq \g_{i+j}$ for all $i,j \in \Z_2$.
    \item Skew-supersymmetry: $[a,b]=-(-1)^{|a||b|}[b,a]$, for all homogeneous elements $a,b\in \g$.
    \item Super Jacobi Identity: $[a,[b,c]]=[[a,b],c]+(-1)^{|a||b|}[b,[a,c]]$, for all homogeneous elements $a,b,c\in \g$.
  \end{enumerate}
\end{defn}
Observe that $\g_{\bar0}$ inherits the structure of a Lie algebra and that $\g_{\bar1}$ inherits the structure of a $\g_{\bar0}$-module.  A Lie superalgebra $\g$ is said to be \emph{simple} if there are no nonzero proper ideals, that is, there are no nonzero proper graded subspaces $\mathfrak{i} \subseteq \g$ such that $[\mathfrak{i},\g]\subseteq \mathfrak{i}$.  A finite-dimensional simple Lie superalgebra $\g = \g_{\bar0} \oplus \g_{\bar1}$ is said to be \emph{classical} if the $\g_{\bar0}$-module $\g_{\bar1}$ is completely reducible.  Otherwise, it is said to be of \emph{Cartan type}.

For a classical Lie superalgebra $\g$, the $\g_{\bar 0}$-module $\g_{\bar1}$ is either irreducible or a direct sum of two irreducible representations.  (This follows, for example, from the classification theorem \cite[Th.~2]{Kac77}, or see \cite[\S2.8]{FSS00}.)  In the first case, $\g$ is said to be of \emph{type II}, and in the second case, $\g$ is said to be of \emph{type I}. A classical Lie superalgebra is said to be \emph{basic} if it admits a nondegenerate invariant bilinear form. Otherwise, it is said to be \emph{strange}.  We will mostly be concerned with basic Lie superalgebras.  However, the majority of our results also hold for the Lie superalgebra $\fsl(n,n)$, $n \ge 2$, which is a 1-dimensional central extension of the basic Lie superalgebra $A(n-1,n-1)$.  (Throughout the paper we will somewhat abuse terminology by talking of the Lie superalgebras $A(m,n)$, $B(m,n)$, etc., instead of the Lie superalgebras of \emph{type} $A(m,n)$, $B(m,n)$, etc.)

\details{
  Table~\ref{table} lists the basic Lie superalgebras that are not Lie algebras.
  \begin{table}[!h]
    \color{OliveGreen}
    \begin{tabular}{ccc}
      \toprule
      $\g$ & $\g_{\bar 0}$ & Type \\
      \midrule
      $A(m,n)$, $m > n \ge 0$ & $A_m\oplus A_n\oplus \C$ & I \\
      $A(n,n)$, $n \ge 1$ & $A_n \oplus A_n$ & I \\
      $\fsl(n,n)$, $n \ge 2$ & $A_{n-1} \oplus A_{n-1} \oplus \C$ & N/A \\
      $C(n+1)$, $n \geq 1$ & $C_n \oplus \C$ & I \\
      $B(m,n)$, $m \geq 0$, $n\geq 1$ & $B_m\oplus C_n$ & II \\
      $D(m,n)$, $m \geq 2$, $n\geq 1$ & $D_m\oplus C_n$ & II \\
      $F(4)$ & $A_1\oplus B_3$ & II\\
      $G(3)$ & $A_1\oplus G_2$ & II \\
      $D(2,1;\alpha)$, $\alpha\neq 0,-1$ & $A_1\oplus A_1\oplus A_1$ & II \\
      \bottomrule
    \end{tabular}
    \bigskip
    \caption{The basic classical Lie superalgebras that are not Lie algebras, together with their even part and their type} \label{table}
  \end{table}
}

\subsection{Contragredient Lie superalgebras}

Let $I = \{1,\dotsc, n\}$, let $A=(a_{ij})_{i,j \in I}$ be a complex matrix, and fix a function $p \colon I \to \Z_2$.  Fix a vector space $\h$ of dimension $2n- \rank A$ and linearly independent $\alpha_i \in \h^*$, $i\in I$, and $H_i \in \h$, $i\in I$, such that $\alpha_j(H_i)=a_{ij}$, for all $i,j \in I$.  We define $\tilde{\g}(A)$ to be the Lie superalgebra generated by the even vector space $\h$ and elements $X_i, Y_i$, $i\in I$, with the parity of $X_i$ and $Y_i$ equal to $p(i)$, and subject to the relations
\begin{equation*}
  [X_i,Y_j]=\delta_{ij}H_i, \quad [H,H']=0, \quad [H,X_i]=\alpha_i(H)X_i,\quad [H,Y_i] = -\alpha_i(H)Y_i,
\end{equation*}
for $i,j \in I$ and $H,H'\in\h$.

The \emph{contragredient} Lie superalgebra $\g=\g(A)$ is defined to be the quotient of $\tilde{\g}(A)$ by the ideal that is maximal among all the ideals that intersects $\h$ trivially (see \cite[\S5.2]{Mus12}).  The images of the elements $X_i, Y_i, H_i$, $i\in I$, in $\g(A)$ are denoted by the same symbols.

Since the action of $\h$ on $\g$ is diagonalizable, we have a root space decomposition
\[
  \g=\h\oplus \bigoplus_{\alpha\in \Delta}\g_{\alpha},\quad \Delta\subseteq \h^*,
\]
where every root space $\g_\alpha$ is either purely even or purely odd. A root $\alpha$ is called \emph{even} (resp.\ \emph{odd}) if $\g_\alpha \subseteq \g_{\bar 0}$ (resp. $\g_\alpha\subseteq \g_{\bar 1}$). We denote by $\Delta_{\bar 0}$ and $\Delta_{\bar 1}$ the sets of even and odd roots respectively. A linearly independent subset $\Sigma=\{\beta_1,\dotsc,\beta_n\} \subseteq \Delta$ is called a \emph{base} if we can find $X_{\beta_i}\in\g_{\beta_i}$ and $Y_{\beta_i}\in\g_{-\beta_i}$, $i = 1,\dotsc,n$, such that $\{X_{\beta_i}, Y_{\beta_i} \mid i=1,\dotsc, n\}\cup \h$ generates $\g(A)$, and
\[
  [X_{\beta_i}, Y_{\beta_j}]=0 \text{ for } i \neq j.
\]
Defining $H_{\beta_i}=[X_{\beta_i},Y_{\beta_i}]$, it follows that the elements $X_{\beta_i}, Y_{\beta_i}$ and $H_{\beta_i}$ satisfy the following relations:
\begin{equation}\label{eq:contragredient-relations}
  [H_{\beta_j},X_{\beta_i}]=\beta_i(H_{\beta_j})X_{\beta_i},\ [H_{\beta_j},Y_{\beta_i}]=-\beta_i(H_{\beta_j})Y_{\beta_i},\ [X_{\beta_i},Y_{\beta_j}]=\delta_{i j} H_{\beta_i},\ i,j \in \{1,\dotsc,n\}.
\end{equation}
The matrix $A_\Sigma=(b_{ij})$, where $b_{ij}=\beta_j(H_{\beta_i})$, is called the \emph{Cartan matrix} with respect to the base $\Sigma$. The original set $\Pi=\{\alpha_1,\ldots, \alpha_n\}$ is called the \emph{standard base}. It is clear that $A$ is the Cartan matrix associated to $\Pi$, i.e.\ $A = A_\Pi$.  The relations \eqref{eq:contragredient-relations} imply that every root is a purely positive or purely negative integer linear combination of elements in $\Sigma$.  We call such a root positive or negative, respectively, and we have the decomposition $\Delta = \Delta^+(\Sigma) \sqcup \Delta^-(\Sigma)$, where $\Delta^+(\Sigma)$ and $\Delta^-(\Sigma)$ denote the set of positive and negative roots, respectively. A positive root is called \emph{simple} if it cannot be written as a sum of two positive roots. It is clear that a root is simple if and only if it lies in $\Sigma$.  Thus, $\Sigma$ is a \emph{system of simple roots} in the usual sense.  We define $\Sigma_z := \Sigma \cap \Delta_z$ and $\Delta_z^\pm(\Sigma) := \Delta_z \cap \Delta^\pm(\Sigma)$ for $z \in \Z_2$.  The triangular decomposition of $\g$ induced by $\Sigma$ is given by
\[
  \g=\n^-(\Sigma)\oplus \h\oplus \n^+(\Sigma),
\]
where $\n^+(\Sigma)$ (resp.\ $\n^-(\Sigma)$) is the subalgebra generated by $X_\beta$ (resp.\ $Y_\beta$), $\beta\in \Sigma$. The subalgebra $\bb(\Sigma)=\h\oplus \n^+(\Sigma)$ is called the \emph{Borel subalgebra} corresponding to $\Sigma$.  Note that $\Delta_{\bar 0}^+(\Sigma)$ is a system of positive roots for the Lie algebra $\g_{\bar 0}$. We denote by $\Sigma(\g_{\bar 0})$ the set of simple roots of $\g_{\bar 0}$ with respect to this system.

Suppose that $\g$ is equal to $A(m,n)$ with $m \neq n$, $\gl(n,n)$, $B(m,n)$, $C(n)$, $D(m,n)$, $D(2,1;\alpha)$, $F(4)$, or $G(3)$. By \cite[Theorems~5.3.2, 5.3.3 and 5.3.5]{Mus12}, we have that $\g$ is a contragredient Lie superalgebra. The Lie superalgebra $\fsl(n,n)$ (resp.\ $A(n,n)$) is isomorphic to $[\gl(n,n),\gl(n,n)]$ (resp.\ $[\gl(n,n),\gl(n,n)]/C$, where $C$ is a one-dimensional center).  The image of $X\in\fsl(n,n)$ in $A(n,n)$ will be denoted by the same symbol. Fixing a base $\Sigma$ of $\gl(n,n)$, the triangular decomposition $\gl(n,n) = \n^-(\Sigma) \oplus \h \oplus \n^-(\Sigma)$ induces the triangular decompositions
\begin{equation*}\label{eq:triang-dec-A(n,n)}
  \fsl(n,n)=\n^-(\Sigma)\oplus \h'\oplus \n^+(\Sigma) \quad \text{and} \quad A(n,n)=\n^-(\Sigma)\oplus (\h'/C)\oplus \n^+(\Sigma),
\end{equation*}
where $\h'$ is the subspace of $\h$ generated by $H_\beta$, $\beta \in \Sigma$ (see \cite[Lem.~5.2.3]{Mus12}).  In particular, any root of $\fsl(n,n)$ or $A(n,n)$ is a purely positive or a purely negative integer linear combination of elements in $\Sigma$. Therefore $\Delta=\Delta^+(\Sigma)\sqcup \Delta^-(\Sigma)$ is a decomposition of the system of roots of $\frak{sl}(n,n)$ and $A(n,n)$. The matrix $A_\Sigma$ is also called the Cartan matrix of $\frak{sl}(n,n)$ and $A(n,n)$ corresponding to $\Sigma$.

\begin{rmk} \label{rmk:dim1-root-space}
  Assume $\g$ is a basic Lie superalgebra, $\gl(n,n)$ with $n\geq 2$,  or $\fsl(n,n)$ with $n\geq 3$.  Then $[\g_\alpha,\g_\beta]\neq 0$ if $\alpha, \beta, \alpha+\beta\in \Delta$.  In particular, the parity of $\alpha + \beta$ is the sum of the parities of $\alpha$ and $\beta$.  \details{
    Since any root is purely even or purely odd, we have $\g_\gamma \subseteq\g_{|\gamma|}$, where $|\gamma|$ is the parity of $\gamma$. Thus $[\g_\alpha,\g_\beta]\subseteq [\g_{|\alpha|},\g_{|\beta|}]\subseteq \g_{|\alpha|+|\beta|}$, which implies that $\g_{\alpha+\beta}\cap \g_{|\alpha|+|\beta|}\neq 0$, and so $\g_{\alpha+\beta}\subseteq \g_{|\alpha|+|\beta|}$.
  }
  Moreover, if $\g \ne A(1,1)$, then $\dim \g_\alpha =1$ for all $\alpha\in \Delta$ (see \cite[Ch.~2]{Mus12}).
\end{rmk}

In Section~\ref{sec:local-Weyl}, we will be particularly interested in systems of simple roots $\Sigma$ satisfying the following property:
\begin{equation} \label{eq:system-key-propery}
  \text{For all } \alpha \in \Sigma_{\bar 1}, \text{ there exists } \alpha' \in \Delta_{\bar 1}^+(\Sigma) \text{ such that } \alpha + \alpha' \in \Delta(\Sigma).
\end{equation}
Note that such an element $\alpha + \alpha'$ is necessarily an even root.  Our next goal is to show that a system of simple roots satisfying \eqref{eq:system-key-propery} always exists.

Let $\Sigma$ be a system of simple roots and suppose that $\beta \in \Sigma$ is an odd root with $\beta(H_\beta)=0$. (Such a root is known as an \emph{isotropic} odd root.) Then define the \emph{reflection} $r_\beta \colon \Sigma \to \Delta$ with respect to $\beta$ by
\begin{gather*}
  r_\beta(\beta) = -\beta, \\
  r_\beta(\beta') = \beta',\quad \text{for } \beta' \in \Sigma,\ \beta' \neq \beta,\ \beta(H_{\beta'})=\beta'(H_\beta)=0, \\
  r_\beta(\beta') = \beta + \beta',\quad \text{for } \beta' \in \Sigma,\ \beta' \neq \beta,\ \beta(H_{\beta'}) \neq 0 \text{ or } \beta'(H_\beta) \neq 0.
\end{gather*}
By \cite[Lem.~1.30]{CW12}, $r_\beta(\Sigma)$ is a system of simple roots, and
\begin{equation} \label{eq:system-reflection}
  \Delta^+(r_\beta(\Sigma)) \setminus \{-\beta\} = \Delta^+(\Sigma) \setminus \{\beta\}.
\end{equation}
(We use here the fact that $\gl(n,n)$ and $\fsl(n,n)$ have the same system of simple roots as $A(n,n)$.)

If $\g$ is a basic Lie superalgebra, $\gl(n,n)$, $n \ge 2$, or $\fsl(n,n)$, $n\geq 2$, then $\g$ admits a system of simple roots with only one odd root (see \cite[Tables~3.4.4 and~5.3.1]{Mus12}).  Let $\Pi = \{\gamma_1,\dotsc,\gamma_n\}$ denote such a system, and let $s \in \{1,\dotsc,n\}$ such that $\gamma_s$ is the unique odd root that lies in $\Pi$.  The system $\Pi$ is often called a \emph{distinguished} system of simple roots.  When $\g \neq B(0,n)$, we have that $\gamma_s$ is an odd isotropic regular root. Then we can consider the odd reflection $r_{\gamma_s}$ with respect to $\gamma_s$.

\begin{prop} \label{prop:nice-system-exists}
  Let $\Pi$ be a distinguished system of simple roots for $\g$.
  \begin{enumerate}
    \item \label{lem-item:key-system-type-II} If $\g$ is a basic Lie superalgebra of type II, then $\Pi$ satisfies condition \eqref{eq:system-key-propery}.

    \item \label{lem-item:key-system-reflected} If $\g$ is $\gl(n,n)$, $n \ge 2$, $\fsl(n,n)$, $n \ge 2$, or a basic Lie superalgebra other than $B(0,n)$, then $r_{\gamma_s}(\Pi)$ satisfies condition \eqref{eq:system-key-propery}.
  \end{enumerate}
  In particular, if $\g$ is a basic Lie superalgebra, $\gl(n,n)$, $n \ge 2$, or $\fsl(n,n)$, $n\geq 2$, then it admits at least one system of simple roots satisfying \eqref{eq:system-key-propery}.
\end{prop}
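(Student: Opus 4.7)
The plan is to handle part~(b) first, since it admits a uniform argument via the odd reflection $r_{\gamma_s}$, and then dispatch part~(a) by case-by-case inspection of the type~II root systems. The final ``in particular'' statement is then immediate, since every $\g$ under consideration is covered by (a), (b), or both.

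For part~(b), the hypothesis excludes $B(0,n)$, so $\gamma_s$ is isotropic and the odd reflection $r_{\gamma_s}$ is defined. Set $\Pi' = r_{\gamma_s}(\Pi)$. Because $\Pi$ is distinguished, every element of $\Pi\setminus\{\gamma_s\}$ is even; the three cases in the definition of $r_{\gamma_s}$ then show that the odd simple roots of $\Pi'$ are exactly $-\gamma_s$ together with the elements $\gamma_s + \beta'$ for those $\beta'\in\Pi\setminus\{\gamma_s\}$ adjacent to $\gamma_s$ in the Dynkin diagram (i.e., $\beta'(H_{\gamma_s})\neq 0$ or $\gamma_s(H_{\beta'})\neq 0$). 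For every $\g$ listed in~(b), the distinguished Dynkin diagram is connected with at least two nodes, so $\gamma_s$ has at least one such neighbour $\beta'$. To verify \eqref{eq:system-key-propery} for $\Pi'$: if $\alpha = -\gamma_s$, take $\alpha' = \gamma_s + \beta'$, giving $\alpha + \alpha' = \beta' \in \Delta_{\bar 0} \subseteq \Delta$; if instead $\alpha = \gamma_s + \beta''$ for some adjacent $\beta''$, take $\alpha' = -\gamma_s$, giving $\alpha + \alpha' = \beta'' \in \Delta_{\bar 0} \subseteq \Delta$. In both cases $\alpha'$ is a simple root of $\Pi'$, hence an element of $\Delta_{\bar 1}^+(\Pi')$.

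For part~(a), since $\Pi$ is distinguished, $\Sigma_{\bar 1} = \{\gamma_s\}$, so we need only produce a single $\alpha' \in \Delta_{\bar 1}^+(\Pi)$ with $\gamma_s + \alpha' \in \Delta$. For $B(0,n)$, $\gamma_s = \delta_n$ is non-isotropic, $2\gamma_s \in \Delta_{\bar 0}$, and we take $\alpha' = \gamma_s$. For $B(m,n)$ with $m \geq 1$ and for $D(m,n)$, in the standard $\delta_i/\epsilon_j$ coordinates $\gamma_s = \delta_n - \epsilon_1$; one checks that $\alpha' = \delta_n + \epsilon_1$ is a non-negative integer combination of the distinguished simple roots and hence lies in $\Delta_{\bar 1}^+(\Pi)$, while $\gamma_s + \alpha' = 2\delta_n \in \Delta_{\bar 0}$. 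Analogous explicit choices of $\alpha'$ for $D(2,1;\alpha)$, $F(4)$ and $G(3)$ are read off from the tables in \cite[\S3.4, \S5.3]{Mus12}. The main difficulty here is organisational rather than conceptual: one must carefully identify $\gamma_s$ and the adjacency data in each distinguished Dynkin diagram, though the uniform argument of~(b) restricts this bookkeeping to the short type~II list treated in~(a).
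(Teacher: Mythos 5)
Your proposal is correct and follows essentially the same route as the paper: part (a) by case-by-case inspection of the distinguished type II systems, and part (b) by applying the odd reflection $r_{\gamma_s}$ and observing that each odd simple root of $r_{\gamma_s}(\Pi)$ (namely $-\gamma_s$ or $\gamma_s+\beta'$ for a neighbour $\beta'$) pairs with another odd simple root to give an even root. Your phrasing of (b) via connectivity of the Dynkin diagram is a slightly more uniform packaging of the paper's explicit Cartan-matrix check (and absorbs the paper's separate treatment of $D(2,1;\alpha)$), but it is not a genuinely different argument.
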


\begin{proof}
  Part \eqref{lem-item:key-system-type-II} follows from direct examination of the distinguished root systems in type II (see, for example, \cite[Tables~3.54, 3.57--3.60]{FSS00}).
  \details{
    In a distinguished root system, there is only one simple odd root.  We list below this simple odd root $\gamma \in \Sigma_{\bar 1}$, together with an element $\gamma' \in \Delta_{\bar 1}^+$ such that $\alpha + \alpha' \in \Delta$.  Notation is as in \cite[Tables~3.54, 3.57--3.60]{FSS00}.
    \begin{itemize}
      \item $\g = B(m,n)$, $m \geq 0$, $n\geq 1$; $\gamma = \alpha_n$; $\gamma' = \alpha_n + 2 \alpha_{n+1} + \dotsb + 2 \alpha_{n+m}$
      \item $\g = D(m,n)$, $m \geq 2$, $n\geq 1$; $\gamma = \alpha_n$; $\gamma' = \alpha_n + 2 \alpha_{n+1} + \dotsb + 2 \alpha_{n+m-2} + \alpha_{n+m-1} + \alpha_{n+m}$
      \item $\g = F(4)$; $\gamma = \alpha_1$; $\gamma' = \alpha_1 + 3 \alpha_2 + 2 \alpha_3 + \alpha_4$
      \item $\g = G(3)$; $\gamma = \alpha_1$; $\gamma' = \alpha_1 + 4 \alpha_2 + 2 \alpha_3$
      \item $\g = D(2,1;\alpha)$, $\alpha\neq 0,-1$; $\gamma = \alpha_1$; $\gamma' = \alpha_1 + \alpha_2 + \alpha_3$
    \end{itemize}
    One sees that, in each case, $\gamma + \gamma' \in \Delta$.
  }

  Now suppose that $\gamma_s$ is isotropic and let $\Pi' = r_{\gamma_s}(\Pi)$.  To prove part \eqref{lem-item:key-system-reflected}, we will show that $\alpha + r_{\gamma_s}(\gamma_s)\in \Delta_{\bar 0}^+(\Pi')$, for all odd roots $\alpha\in \Pi'\setminus\{r_{\gamma_s}(\gamma_s)\}$.  First assume that $\g$ is $\gl(n,n)$ ($n \ge 2$), $\fsl(n,n)$ ($n \ge 2$), or a basic Lie superalgebra other than $B(0,n)$ or $D(2,1;\alpha)$.  One can verify, by looking at each distinguished  Cartan matrix, that $\gamma_s(H_{\gamma_{s\pm 1}})=-1$ (when $1 \le s \pm 1 \le n$) and $\gamma_s(H_{\gamma_{s\pm j}})=0$ when $j\geq 2$ (and $1 \le s \pm j \le n$).  (See, for example, \cite[Tables~3.53--3.58]{FSS00}.  The odd root $\gamma_s$ is indicated there by an X on the corresponding node in the Dynkin diagram.)  Thus
  \[
    r_{\gamma_s}(\gamma_s)=-\gamma_s,\quad r_{\gamma_s}(\gamma_{s\pm 1})=\gamma_s+\gamma_{s\pm 1}\text{ and } r_{\gamma_s}(\gamma_{s\pm j})=\gamma_{s\pm j},\text{ for all }j\geq 2.
  \]
  Since the only odd root in $\Pi$ is $\gamma_s$, the odd roots of $\Pi'$ are precisely $r_{\gamma_s}(\gamma_{s-1}),r_{\gamma_s}(\gamma_s), r_{\gamma_s}(\gamma_{s+1})$.  Now, by \eqref{eq:system-reflection}, we have $\Delta^+(\Pi)\setminus\{\gamma_s\}=\Delta^+(\Pi')\setminus\{r_{\gamma_s}(\gamma_s)\}$,
  which implies that $\Delta_{\bar 0}^+(\Pi)=\Delta_{\bar 0}^+(\Pi')$. Thus
  \[
    r_{\gamma_s}(\gamma_{s\pm 1}) + r_{\gamma_s}(\gamma_s) = \gamma_{s\pm 1} \in \Delta_{\bar 0}^{+}(\Pi)=\Delta_{\bar 0}^{+}(\Pi').
  \]

  Finally, assume $\g=D(2,1;\alpha)$. Then $\Pi=\{\gamma_1,\gamma_2,\gamma_3\}$, where $s=1$ and $\gamma_1(H_{\gamma_{j}})=-1$, for $j=2,3$ (see \cite[Table~3.60]{FSS00}).  Then every element of $\Pi'=\{r_{\gamma_1}(\gamma_1),r_{\gamma_1}(\gamma_2),r_{\gamma_1}(\gamma_3)\}$ is odd, and again $r_{\gamma_1}(\gamma_j) + r_{\gamma_1}(\gamma_1) = \gamma_j \in \Delta_{\bar 0}^+(\Pi) = \Delta_{\bar 0}^+(\Pi')$, for $j=2,3$.
\end{proof}

\begin{rmk}
  There exist systems of simple roots that do not satisfy \eqref{eq:system-key-propery}.  For instance, if $\g$ is of type I, then a distinguished system of simple roots does not satisfy \eqref{eq:system-key-propery}.  This follows from the fact that the induced $\Z$-gradation is of the form $\g_{-1} \oplus \g_0 \oplus \g_1$ (see \cite[Prop.~1.6]{Kac78}).
\end{rmk}

\subsection{Generalized Kac modules}

For the remainder of the paper, we assume that $\g$ is a basic Lie superalgebra or $\fsl(n,n)$, $n\geq 2$.  We fix a system of simple roots $\Sigma$, define
\[
  \Delta_z^+=\Delta_z^+(\Sigma) \text{ for all } z \in \Z_2,
\]
and let $\g = \n^-\oplus \h\oplus \n^+$ be the triangular decomposition induced by $\Sigma$, i.e.\ $\n^\pm=\n^\pm(\Sigma)$.  In the case that $\g$ is $\fsl(n,n)$ or $A(n,n)$, we consider the triangular decomposition induced by $\gl(n,n)$.  Recall that the elements $X_\alpha$, $Y_\alpha$, $\alpha \in \Sigma$, generate the subalgebras $\n^+$ and $\n^-$, respectively.

Since $\g_{\bar 0}$ is a reductive Lie algebra, for each even root $\alpha$ we can choose elements $X_\alpha \in \g_{\alpha}, Y_\alpha \in \g_{-\alpha}$, and $H_\alpha \in \h$, such that the subalgebra generated by these elements is isomorphic to $\fsl(2)$, with these elements satisfying the relations for the standard Chevalley generators.  In this case, we say the set $\{X_\alpha, Y_\alpha, H_\alpha\}$ is an $\fsl(2)$-triple.

We denote the irreducible highest weight $\g$-module with highest weight $\lambda \in \h^*$ by $V(\lambda)$. Define
\begin{equation}
  \Lambda^+ = \Lambda^+(\Sigma) = \{\lambda\in\h^* \mid \dim V(\lambda)<\infty\}.
\end{equation}
Note that, for $\lambda \in \Lambda^+$, since $V(\lambda)$ is finite dimensional, we have $\lambda(H_\alpha)\in \N$, for all $\alpha \in \Sigma(\g_{\bar 0})$.
\details{
  This follows from considering $V(\lambda)$ as a $\g_{\bar 0}$-module.
}

\begin{defn}[The module $\bar V(\lambda)$] \label{def-kac-mod}
  For $\lambda\in \Lambda^+$, we define ${\bar V}(\lambda)$ to be the $\g$-module generated by a vector $v_\lambda$ with defining relations
  \begin{equation} \label{kac-mod-relations}
    \n^+ v_\lambda=0,\quad hv_\lambda=\lambda(h)v_\lambda,\quad Y_\alpha^{\lambda(H_\alpha)+1}v_\lambda=0,\quad \text{for all } h \in \h,\ \alpha \in \Sigma(\g_{\bar 0}).
  \end{equation}
\end{defn}

\begin{prop} \label{prop:Vbar-fd}
  For all $\lambda \in \Lambda^+$, the module ${\bar V}(\lambda)$ is finite-dimensional.
\end{prop}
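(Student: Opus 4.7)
The plan is to use the super Poincaré--Birkhoff--Witt theorem to reduce finite-dimensionality of $\bar V(\lambda)$ to the analogous classical statement for the reductive Lie algebra $\g_{\bar 0}$.

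First, I would note that the relations $\n^+ v_\lambda = 0$ and $h v_\lambda = \lambda(h) v_\lambda$ together with the triangular decomposition $\g = \n^- \oplus \h \oplus \n^+$ give $\bar V(\lambda) = U(\n^-) v_\lambda$. Next I would study the $\g_{\bar 0}$-submodule $M := U(\g_{\bar 0}) v_\lambda$. Since $\n^+_{\bar 0} \subseteq \n^+$, the vector $v_\lambda$ is a highest weight vector for $\g_{\bar 0}$ of weight $\lambda|_\h$, and the imposed relations $Y_\alpha^{\lambda(H_\alpha)+1} v_\lambda = 0$ for $\alpha \in \Sigma(\g_{\bar 0})$ are precisely those cutting the $\g_{\bar 0}$-Verma module of this highest weight down to its finite-dimensional irreducible quotient. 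Invoking the standard Serre-style theorem for semisimple Lie algebras (applied to the derived subalgebra of $\g_{\bar 0}$), together with the fact that the center of $\g_{\bar 0}$ acts on $v_\lambda$ by the scalar determined by $\lambda$, I would conclude that $M$ is finite-dimensional.

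Finally I would apply the super PBW theorem to $\n^-$: fixing ordered bases of $\n^-_{\bar 0}$ and of $\n^-_{\bar 1}$, and ordering odd basis elements before even ones, one obtains a vector-space basis of $U(\n^-)$ consisting of products $Y^I Z^\alpha$, where $Z^\alpha$ is an ordered PBW monomial in the basis of $\n^-_{\bar 0}$ and $Y^I$ is the product of those odd basis vectors indexed by a subset $I \subseteq \{1,\dotsc,\dim \n^-_{\bar 1}\}$. Because $Z^\alpha v_\lambda \in M$, this yields
\[
  \bar V(\lambda) = \sum_I Y^I \cdot M,
\]
a sum of at most $2^{\dim \n^-_{\bar 1}}$ finite-dimensional subspaces, which is therefore finite-dimensional.

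The main (mild) obstacle is the middle step: one must check that the relations defining $M$ as a $\g_{\bar 0}$-module coincide with those of the finite-dimensional irreducible $\g_{\bar 0}$-module, and handle the central component of $\g_{\bar 0}$ separately (the center acts by scalars through $\lambda$). For $\g = \fsl(n,n)$ one must also verify that the Cartan $\h'$ used in the triangular decomposition still supports this argument, but no new difficulty arises.
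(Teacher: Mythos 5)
Your proposal is correct and follows essentially the same route as the paper: identify the $\g_{\bar 0}$-submodule generated by $v_\lambda$ as a quotient of the finite-dimensional irreducible $\g_{\bar 0}$-module $L(\lambda)$ (using that the third relation is exactly the classical one), then use the super PBW theorem to write $\bar V(\lambda)$ as a sum of finitely many odd monomials applied to that finite-dimensional subspace. The only cosmetic difference is that the paper sums over monomials in a basis of all of $\g_{\bar 1}$ applied to $V'=U(\g_{\bar 0})v_\lambda$, whereas you restrict to $\n^-_{\bar 1}$; both work.
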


\begin{proof}
  Let $L(\lambda)$ be the irreducible $\g_{\bar 0}$-module of highest weight $\lambda$.  Since $\g_{\bar 0}$ is a reductive Lie algebra and $\lambda(H_\alpha) \in \N$, for all $\alpha \in \Sigma(\g_{\bar 0})$, we have that $L(\lambda)$ is finite dimensional. Moreover, it is well known that $L(\lambda)$ is isomorphic to the $\g_{\bar 0}$-module generated a vector $u_\lambda$ with defining relations
  \[
    \n_{\bar 0}^+ u_\lambda=0,\quad h u_\lambda = \lambda(h) u_\lambda,\quad Y_\alpha^{\lambda(H_\alpha)+1} u_\lambda=0,\quad \text{for all } h \in \h,\ \alpha \in \Sigma(\g_{\bar 0}).
  \]
  Let $V'=U(\g_{\bar 0})v_\lambda\subseteq {\bar V}(\lambda)$ be the $\g_{\bar 0}$-submodule of ${\bar V}(\lambda)$ generated by $v_\lambda$.  Then the map given by
  \[
    \varphi \colon L(\lambda)\rightarrow V',\quad  x u_\lambda\mapsto x v_\lambda,\quad \text{for all } x \in U(\g_{\bar 0}),
  \]
  is a well-defined epimorphism of $\g_{\bar 0}$-modules.
  \details{
    Indeed, since
    \begin{gather*}
      \varphi((h-\lambda(h)) u_\lambda)=(h-\lambda(h)) v_\lambda=0, \text{ for all }h\in \h,\\
      \varphi(Y_\alpha^{\lambda(H_\alpha)+1} u_\lambda) = Y_\alpha^{\lambda(H_\alpha)+1} v_\lambda=0, \text{ for all } \alpha \in \Sigma(\g_{\bar 0}),\\
      \varphi(\n_{\bar 0}^+ u_\lambda)=\n_{\bar 0}^+v_\lambda\subseteq \n^+v_\lambda=0,
    \end{gather*}
    we have that
    \[
      (x-x') u_\lambda=0 \implies \varphi ((x-x')u_\lambda)=0, \text{ for all }x,x'\in U(\g_{\bar 0}).
    \]
    But this implies that
    \[
      x u_\lambda = x' u_\lambda \implies \varphi(x u_\lambda)=\varphi(x' u_\lambda), \text{ for all } x,x'\in U(\g_{\bar 0}).
    \]
    Therefore, the map is well defined.
  }
  Thus, $V'$ is finite dimensional.  Then it follows from the PBW Theorem for Lie superalgebras (see, for instance, \cite[Th.~1.36]{CW12}) that $\bar V(\lambda)$ is finite dimensional.
  \details{
    Let $\{x_i \mid 1\leq i\leq \dim \g_{\bar 1}\}$ be a basis of $\g_{\bar 1}$.  Then the PBW Theorem implies that
    \[
      {\bar V}(\lambda)=\sum_{1\leq j_1< \dotsb <j_k\leq \dim \g_{\bar 1}} x_{j_1} \dotsm x_{j_k} V'.
    \]
  }
\end{proof}

\begin{lemma} \label{univ-prop-kac-mod}
  Suppose $V$ is a finite-dimensional $\g$-module generated by a highest weight vector of weight $\lambda\in\Lambda^+$.  Then there exists an unique submodule $W$ of ${\bar V}(\lambda)$ such that ${\bar V}(\lambda)/W \cong V$ as $\g$-modules.
\end{lemma}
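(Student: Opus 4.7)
The plan is to construct a surjective $\g$-module homomorphism $\varphi \colon \bar V(\lambda) \to V$ and set $W := \ker \varphi$. Fix a highest weight generator $v \in V$ of weight $\lambda$. By the presentation of $\bar V(\lambda)$ in Definition~\ref{def-kac-mod}, sending $v_\lambda \mapsto v$ extends to a well-defined $\g$-module homomorphism provided $v$ satisfies the three defining relations \eqref{kac-mod-relations}. The conditions $\n^+ v = 0$ and $h v = \lambda(h) v$ are immediate from the hypothesis that $v$ is a highest weight vector of weight $\lambda$.

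The main step is to verify $Y_\alpha^{\lambda(H_\alpha)+1} v = 0$ for each $\alpha \in \Sigma(\g_{\bar 0})$. For such $\alpha$, the associated $\fsl(2)$-triple $\{X_\alpha, Y_\alpha, H_\alpha\}$ acts on the finite-dimensional space $V$. Since $\alpha \in \Delta_{\bar 0}^+(\Sigma) \subseteq \Delta^+(\Sigma)$, we have $X_\alpha \in \n^+$, so $X_\alpha v = 0$; moreover $H_\alpha v = \lambda(H_\alpha) v$ with $\lambda(H_\alpha) \in \N$, as observed just after the definition of $\Lambda^+$. The cyclic $\fsl(2)$-submodule $U(\fsl(2)) v \subseteq V$ is therefore a finite-dimensional cyclic quotient of the Verma module of highest weight $\lambda(H_\alpha)$, hence irreducible of dimension $\lambda(H_\alpha)+1$, which forces $Y_\alpha^{\lambda(H_\alpha)+1} v = 0$. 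With all three relations verified, $\varphi$ is well-defined; it is surjective because its image contains $v$ and $V = U(\g) v$. Setting $W = \ker \varphi$ gives $\bar V(\lambda)/W \cong V$, proving existence.

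For uniqueness, suppose $W_1, W_2 \subseteq \bar V(\lambda)$ both satisfy $\bar V(\lambda)/W_i \cong V$. Composing each isomorphism with its quotient map yields surjections $\pi_i \colon \bar V(\lambda) \to V$ that send $v_\lambda$ to a nonzero vector of weight $\lambda$ in $V$. A PBW argument applied to $V = U(\g) v = U(\n^-) v$ shows that the $\lambda$-weight space of $V$ is the one-dimensional span of $v$, so $\pi_2(v_\lambda) = c\,\pi_1(v_\lambda)$ for some $c \in \C^\times$; since $v_\lambda$ generates $\bar V(\lambda)$, this forces $\pi_2 = c\,\pi_1$, whence $W_2 = \ker \pi_2 = \ker \pi_1 = W_1$. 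I anticipate no serious obstacle; the whole argument is a routine application of the universal property of a highest weight module combined with basic $\fsl(2)$-theory, with the only substantive input being the $\fsl(2)$-theoretic fact producing the truncation relations.
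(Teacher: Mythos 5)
Your proposal is correct and takes essentially the same approach as the paper: construct the epimorphism $\bar V(\lambda)\to V$ by checking that the highest weight generator of $V$ satisfies the defining relations \eqref{kac-mod-relations} (the paper simply invokes reductivity of $\g_{\bar 0}$ and finite-dimensionality of $V$ where you spell out the $\fsl(2)$-theory), and deduce uniqueness from the one-dimensionality of the $\lambda$-weight spaces. No issues.
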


\begin{proof}
  Let $v \in V_\lambda$ be a highest weight vector.  Then the first two relations in \eqref{kac-mod-relations} are satisfied by $v$, by the definition of a highest weight vector.  The fact that $\g_{\bar 0}$ is a reductive Lie algebra and $V$ is finite dimensional implies that $v$ also satisfies the last relation in \eqref{kac-mod-relations}.  Thus the map ${\bar V}(\lambda)\rightarrow V$ defined by extending the assignment $v_\lambda\mapsto v$ is a well-defined epimorphism of $\g$-modules. Since $\dim V_\lambda = 1 = \dim {\bar V}(\lambda)_\lambda$ and homomorphisms between modules preserve weight spaces, this map is unique up to scalar multiple.  Thus, the kernel $W$ of this map is unique.
\end{proof}

Since every irreducible finite-dimensional $\g$-module is generated by a highest weight vector of weight $\lambda\in \Lambda^+$,  Lemma~\ref{univ-prop-kac-mod} applies to irreducible finite-dimensional $\g$-modules.

\begin{rmk} \label{rmk:gen-Kac}
  It follows from Lemma~\ref{univ-prop-kac-mod} that $\bar V(\lambda)$ coincides with the \emph{generalized Kac module} defined in \cite[p.~689]{Cou14}.  Thus, when $\Sigma$ is a distinguished root system, it follows from \cite[Lem.~11]{Cou14} that $\bar V(\lambda)$ is isomorphic to the usual Kac module defined in \cite[p.~613]{Kac78}.
\end{rmk}

\section{Global Weyl modules}

Recall that $\g$ is either a basic classical Lie superalgebra or $\fsl(n,n)$, $n \ge 2$.  Let $A$ be an associative commutative unital $\C$-algebra.  We can then consider the Lie superalgebra $\g \otimes_\C A$, where the $\Z_2$-grading is given by $(\g\otimes A)_j=\g_j\otimes A, j\in\Z_{2}$, and the bracket is determined by $[x_{1}\otimes a_{1},x_{2}\otimes a_{2}]=[x_{1},x_{2}]\otimes a_{1}a_{2}$ for $x_{i}\in\g$, $a_{i}\in A$, $i\in \{1,2\}$. We refer to a superalgebra of this form as a \emph{map Lie superalgebra}, inspired by the case where $A$ is the ring of regular functions on an algebraic variety.  From now on, we consider $\g \subseteq \g \otimes A$ as a subalgebra via the natural isomorphism $\g \cong \g \otimes \C$.

Let $\I$ be the full subcategory of the category of $\g_{\bar 0}$-modules whose objects are those modules that are isomorphic to direct sums of irreducible finite-dimensional $\g_{\bar 0}$-modules. Note that, if $V\in\I$, then every element of $V$ lies in a finite-dimensional $\g_{\bar 0}$-submodule of $V$.  Let $\I(\g\otimes A,\g_{\bar 0})$ denote the full subcategory of the category of $\g \otimes A$-modules whose objects are the $\g \otimes A$-modules whose restriction to $\g_{\bar 0}$ lies in $\I$.

If $V$ is a $\g$-module, then, by the PBW Theorem, we have an isomorphism of vector spaces
\begin{equation}\label{cha-fou-equ.1}
  P_A(V) := U(\g\otimes A)\otimes_{U(\g)} V\cong U(\g\otimes A_+)\otimes_\C V,
\end{equation}
where $A_+$ is a vector space complement to $\C \subseteq A$.  We will view $V$ as a $\g$-submodule of $P_A(V)$ via the natural identification $V \cong \C \otimes V \subseteq P_A(V)$.

\begin{lemma}\label{Fou14-lem3.4}
Let $V$ be a $\g$-module whose restriction to $\g_{\bar 0}$ lies in $\I$. Then $P_A(V)\in \I(\g\otimes A,\g_{\bar 0})$.
\end{lemma}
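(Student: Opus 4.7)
The plan is to use the PBW isomorphism \eqref{cha-fou-equ.1} to transport the $\g_{\bar 0}$-action from $P_A(V)$ to the tensor product $U(\g\otimes A_+)\otimes V$. For $x\in\g_{\bar 0}$, the adjoint action $\ad(x)(y\otimes a)=[x,y]\otimes a$ preserves $\g\otimes A_+$ and extends uniquely to a derivation of $U(\g\otimes A_+)$, also denoted $\ad(x)$. Using $xu=\ad(x)(u)+ux$ in $U(\g\otimes A)$ and $(ux)\otimes v=u\otimes(xv)$ in the quotient $P_A(V)$, one checks that under \eqref{cha-fou-equ.1} the $\g_{\bar 0}$-action becomes the diagonal action
\[
  x\cdot(u\otimes v)=\ad(x)(u)\otimes v+u\otimes(x\cdot v).
\]

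Next I would establish local finiteness. Let $F^\bullet$ denote the PBW filtration on $U(\g\otimes A_+)$. Any element of $U(\g\otimes A_+)\otimes V$ lies in a subspace $F^n U(\g\otimes A')\otimes V'$, where $A'\subseteq A_+$ is a finite-dimensional subspace and $V'\subseteq V$ is a finite-dimensional $\g_{\bar 0}$-submodule (the latter existing because $V|_{\g_{\bar 0}}\in\I$). Since $\ad(\g_{\bar 0})$ preserves $\g\otimes A'$ and acts by derivations on $U(\g\otimes A')$, this subspace is finite-dimensional and $\g_{\bar 0}$-stable. As a sum of submodules in $\I$ remains in $\I$, it then suffices to show each such subspace is a completely reducible $\g_{\bar 0}$-module.

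For this, I would invoke Weyl's theorem for reductive Lie algebras: a finite-dimensional module over a reductive Lie algebra is completely reducible provided its center acts semisimply. Since $\g_{\bar 0}$ is reductive and $\g$ is a completely reducible $\g_{\bar 0}$-module (the adjoint action on $\g_{\bar 0}$ itself is completely reducible, while $\g_{\bar 1}$ is so by classicalness), the center $Z(\g_{\bar 0})$ acts semisimply on $\g$ and therefore on $\g\otimes A'$. As the extensions of these commuting actions to $U(\g\otimes A')$ are derivations simultaneously diagonalizable on the generators, they are simultaneously diagonalizable on the whole enveloping algebra, and in particular on $F^n U(\g\otimes A')$. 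Since $V'$ is also completely reducible, $Z(\g_{\bar 0})$ acts semisimply on the tensor product $F^n U(\g\otimes A')\otimes V'$, and Weyl's theorem yields the required complete reducibility. The most delicate step is this last propagation of semisimplicity of the center's action from the generators to all of $U(\g\otimes A_+)$.
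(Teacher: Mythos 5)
Your proof is correct and follows essentially the same route as the paper, whose proof is simply a citation of \cite[Lem.~3.4]{FMS14}: transport the $\g_{\bar 0}$-action through the PBW isomorphism to the diagonal action on $U(\g\otimes A_+)\otimes V$, note that every element lies in a finite-dimensional $\g_{\bar 0}$-stable subspace of the form $F^n U(\g\otimes A')\otimes V'$, and establish complete reducibility there. The points that actually need checking in the super/reductive setting --- that $\g_{\bar 1}$ is a completely reducible $\g_{\bar 0}$-module and that the semisimple action of $Z(\g_{\bar 0})$ on the generators $\g\otimes A'$ propagates to $U(\g\otimes A')$ because the derivations are diagonal on PBW monomials in eigenvectors --- are exactly the ones you supply, and you handle them correctly.
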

\begin{proof}
The proof is the same of that in \cite[Lem.~3.4]{FMS14}, where $\g$ is a finite-dimensional simple Lie algebra.
\end{proof}

\begin{prop}\label{cha-fou-pro3}
  If $\lambda\in \Lambda^+$, then $P_A({\bar V}(\lambda))$ is generated, as a $U(\g\otimes A)$-module, by the element $v_\lambda$, with defining relations
  \begin{equation}\label{cha-fou-equ.2}
    \n^+ v_\lambda=0,\quad h v_\lambda=\lambda(h) v_\lambda,\quad Y_\alpha^{\lambda(H_\alpha)+1} v_\lambda=0,\quad \text{for all } h \in \h,\ \alpha \in \Sigma(\g_{\bar 0}).
  \end{equation}
\end{prop}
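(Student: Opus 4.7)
The plan is to establish the three standard ingredients of a presentation by generators and relations: (i) that $v_\lambda$ (identified with $1 \otimes v_\lambda$ under \eqref{cha-fou-equ.1}) generates $P_A(\bar V(\lambda))$ over $U(\g \otimes A)$, (ii) that $v_\lambda$ satisfies the relations \eqref{cha-fou-equ.2}, and (iii) that these relations are defining, in the sense that any $U(\g \otimes A)$-module $M$ equipped with a vector $w$ satisfying \eqref{cha-fou-equ.2} (with $w$ in place of $v_\lambda$) is a quotient of $P_A(\bar V(\lambda))$ via $v_\lambda \mapsto w$.

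Items (i) and (ii) are essentially immediate. For (i), by Definition~\ref{def-kac-mod} the vector $v_\lambda$ generates $\bar V(\lambda)$ as a $U(\g)$-module, so any element $1 \otimes u v_\lambda = (u \otimes 1)(1 \otimes v_\lambda) \in P_A(\bar V(\lambda))$ lies in $U(\g \otimes A) \cdot v_\lambda$, and we then obtain all elements of $P_A(\bar V(\lambda)) = U(\g \otimes A) \otimes_{U(\g)} \bar V(\lambda)$ by acting by $U(\g \otimes A)$. For (ii), the inclusion $\bar V(\lambda) \hookrightarrow P_A(\bar V(\lambda))$ is one of $\g$-modules (via $V \cong \C \otimes V$), and the relations \eqref{cha-fou-equ.2} already hold in $\bar V(\lambda)$ by Definition~\ref{def-kac-mod}.

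The main step is (iii), the universal property. Given $M$ and $w \in M$ as above, the subspace $U(\g) \cdot w \subseteq M$ is a $\g$-module on which the generator $w$ satisfies exactly the defining relations of $\bar V(\lambda)$ in Definition~\ref{def-kac-mod}. Hence the assignment $v_\lambda \mapsto w$ extends uniquely to a $U(\g)$-module homomorphism $\varphi \colon \bar V(\lambda) \to M$. Applying the induction functor $U(\g \otimes A) \otimes_{U(\g)} (-)$ and composing with the $U(\g \otimes A)$-action on $M$ gives the required $U(\g \otimes A)$-module homomorphism
\[
  \tilde\varphi \colon P_A(\bar V(\lambda)) = U(\g \otimes A) \otimes_{U(\g)} \bar V(\lambda) \longrightarrow M,\qquad u \otimes x \longmapsto u \cdot \varphi(x),
\]
which sends $1 \otimes v_\lambda \mapsto w$. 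If $w$ generates $M$, this map is surjective; in any case, its existence shows that every $U(\g \otimes A)$-relation satisfied by $v_\lambda$ in $P_A(\bar V(\lambda))$ is a consequence of \eqref{cha-fou-equ.2}.

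I do not anticipate a real obstacle: the argument is the standard fact that induction from $U(\g)$ to $U(\g \otimes A)$ preserves a presentation by generators and relations. The only care required is to keep the identifications straight (especially $v_\lambda \leftrightarrow 1 \otimes v_\lambda$) and to note that the universal property of induction is a purely associative-algebra statement and so is insensitive to the super structure on $\g$.
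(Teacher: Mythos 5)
Your proof is correct and follows essentially the same route as the paper: verify the relations hold for $v_\lambda$, then use the fact that a vector $w$ satisfying \eqref{cha-fou-equ.2} generates a $\g$-module quotient of $\bar V(\lambda)$ together with the induction adjunction to produce the map $u\otimes x\mapsto u\cdot\varphi(x)$, which is exactly the paper's $\pi_2$. The only cosmetic difference is that you invoke Frobenius reciprocity explicitly where the paper writes the map out by hand.
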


\begin{proof}
  It is obvious that the element $v_\lambda\in P_A({\bar V}(\lambda))$ satisfies the relations \eqref{cha-fou-equ.2}. To check that these are all the relations, let $W$ be the $\g\otimes A$-module generated by a vector $w$ with defining relations \eqref{cha-fou-equ.2}.  Then we have a surjective homomorphism of $\g \otimes A$-modules $\pi_1 \colon W \to P({\bar V}(\lambda))$ which maps $w$ to $v_\lambda$.  Now, by relations \eqref{cha-fou-equ.2}, $w\in W$ generates a $\g$-submodule of $W$ isomorphic to ${\bar V}(\lambda)$. Thus, we have an epimorphism
  \[
    \pi_2: P({\bar V}(\lambda))\rightarrow W, \quad u_1\otimes_{U(\g)}u_2 v_\lambda\mapsto u_1u_2w,\quad  u_1\in U(\g\otimes A), u_2 \in U(\g).
  \]
  \details{
    This follows from the fact that $\{u\in U(\g\otimes A) \mid u(1\otimes_{U(\g)} v_\lambda)=0\}=\{u\in U(\g) \mid 1 \otimes_{U(\g)}u v_\lambda=0\}$.
  }
  Since $\pi_1=\pi_2^{-1}$, we have $W\cong P({\bar V}(\lambda))$.
\end{proof}

For $\nu\in \Lambda^+$ and $V\in \I(\g\otimes A,\g_{\bar 0})$, let $V^\nu$ be the unique maximal $\g\otimes A$-module quotient of $V$ such that the weights of $V^\nu$ lie in $\nu-Q^+$, where $Q^+=\sum_{\alpha \in \Sigma} \N \alpha$ is the positive root lattice of $\g$.  In other words,
\[
  V^\nu=V/\sum_{\mu \not \in \nu - Q^+}U(\g\otimes A)V_\mu.
\]

Note that a morphism $\varphi \colon V \to W$ of objects in $\I (\g\otimes A, \g_{\bar 0})$ induces a morphism $\varphi^\nu \colon V^\nu \to W^\nu$.  Let $\I(\g\otimes A,\g_{\bar 0})^\nu$ denote the full subcategory of $\I(\g\otimes A,\g_{\bar 0})$ whose objects are those $V \in \I(\g\otimes A,\g_{\bar 0})$ such that $V^\nu=V$.  Proposition~\ref{prop:Vbar-fd} and Lemma~\ref{Fou14-lem3.4} imply that $P_A({\bar V}(\lambda))\in \I(\g\otimes A,\g_{\bar 0})$ for all $\lambda \in \Lambda^+$.

\begin{defn}[Global Weyl module] \label{def:global-Weyl}
  We define the \emph{global Weyl module} associated to $\lambda\in \Lambda^+$ to be
  \[
   W(\lambda):=P_A({\bar V}(\lambda))^\lambda.
  \]
  We let $w_\lambda$ denote the image of $v_\lambda$ in $W(\lambda)$.
\end{defn}

\begin{prop} \label{prop:gen-rel-global}
  For $\lambda\in \Lambda^+$, the global Weyl module $W(\lambda)$ is generated by $w_\lambda$, with defining relations
  \begin{equation}\label{cha-fou-equ5}
    (\n^+\otimes A)w_\lambda=0,\quad hw_\lambda=\lambda(h)w_\lambda,\quad Y_\alpha^{\lambda(H_\alpha)+1}w_\lambda=0,\quad \text{for all } h \in \h,\ \alpha \in \Sigma(\g_{\bar 0}).
  \end{equation}
\end{prop}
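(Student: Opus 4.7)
The plan is to follow the standard universal-module argument: produce $W(\lambda)$ as a quotient of the abstract module defined by relations \eqref{cha-fou-equ5}, and conversely produce the abstract module as a quotient of $W(\lambda)$.

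First I would verify that $w_\lambda\in W(\lambda)$ satisfies the relations \eqref{cha-fou-equ5}. The second and third relations are immediate from Proposition~\ref{cha-fou-pro3} applied to $v_\lambda\in P_A(\bar V(\lambda))$ together with the fact that $w_\lambda$ is the image of $v_\lambda$ under the canonical projection. For the first relation, note that for any $\beta\in\Delta^+$, $x\in\g_\beta$ and $a\in A$, the element $(x\otimes a)v_\lambda\in P_A(\bar V(\lambda))$ has weight $\lambda+\beta$, which does not lie in $\lambda-Q^+$. By the definition of $W(\lambda)=P_A(\bar V(\lambda))^\lambda$, its image in $W(\lambda)$ is zero, giving $(\n^+\otimes A)w_\lambda=0$.

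Next, let $M$ be the $\g\otimes A$-module generated by a vector $m$ subject to the relations \eqref{cha-fou-equ5}. Since $w_\lambda$ satisfies these same relations, there is a well-defined surjection $\pi\colon M\twoheadrightarrow W(\lambda)$ sending $m\mapsto w_\lambda$. To produce the reverse map, I would first observe that $m$ satisfies the relations \eqref{cha-fou-equ.2} of Proposition~\ref{cha-fou-pro3} (the first relation in \eqref{cha-fou-equ.2} is a consequence of the stronger condition $(\n^+\otimes A)m=0$), so there is a surjection $\pi'\colon P_A(\bar V(\lambda))\twoheadrightarrow M$ with $v_\lambda\mapsto m$. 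It then suffices to show that $\pi'$ factors through $W(\lambda)=P_A(\bar V(\lambda))^\lambda$, i.e., that every weight of $M$ lies in $\lambda-Q^+$.

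This last step is the one that requires the most care, but it is a routine PBW computation. Using the triangular decomposition $\g\otimes A=(\n^-\otimes A)\oplus(\h\otimes A)\oplus(\n^+\otimes A)$ and the PBW theorem for Lie superalgebras, we have
\[
  M=U(\g\otimes A)m=U(\n^-\otimes A)\,U(\h\otimes A)\,U(\n^+\otimes A)\,m.
\]
The relation $(\n^+\otimes A)m=0$ collapses this to $M=U(\n^-\otimes A)\,U(\h\otimes A)\,m$. The subspace $U(\h\otimes A)m$ is concentrated in weight $\lambda$ (since $\h\otimes A$ acts by operators of weight $0$ and $hm=\lambda(h)m$), and applying $U(\n^-\otimes A)$ only produces weight vectors in $\lambda-Q^+$. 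Hence all weights of $M$ lie in $\lambda-Q^+$, so $\pi'$ descends to a surjection $W(\lambda)\twoheadrightarrow M$ which is clearly inverse to $\pi$. This yields $W(\lambda)\cong M$, completing the proof.
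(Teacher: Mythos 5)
Your proposal is correct and follows essentially the same route as the paper: verify the relations hold for $w_\lambda$, get a surjection from the abstractly presented module $M$ onto $W(\lambda)$, then use the presentation of $P_A(\bar V(\lambda))$ from Proposition~\ref{cha-fou-pro3} to map onto $M$ and observe that the weights of $M$ lie in $\lambda-Q^+$ so this map factors through $W(\lambda)=P_A(\bar V(\lambda))^\lambda$. Your PBW argument for the weight bound just makes explicit a step the paper leaves implicit.
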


\begin{proof}
  Since the weights of $W(\lambda)$ lie in $\lambda-Q^+$, it follows that $(\n^+\otimes A)w_\lambda=0$. The remaining relations are clear since they are already satisfied by $v_\lambda$. To prove that these are the only relations, let $W$ be the module generated by an element $w$ with relations \eqref{cha-fou-equ5}, so that we have an epimorphism $\pi_1 \colon W \twoheadrightarrow W(\lambda)$ sending $w$ to $w_\lambda$.  Since the relations \eqref{cha-fou-equ5} imply the relations \eqref{kac-mod-relations}, the vector $w \in W$ generates a $\g$-submodule of $W$ isomorphic to a quotient of $\bar V(\lambda)$.  Thus we have a surjective homomorphism
  \[
    \pi_2 \colon P_A(\bar V(\lambda)) \to W,\quad u_1 \otimes_{U(\g)} u_2 v_\lambda \mapsto u_1 u_2 w,\quad u_1 \in U(\g \otimes A),\ u_2 \in U(\g).
  \]
  Since the $\g$-weights of $W$ are bounded above by $\lambda$, it follows that $\pi_2$ induces a map $W(\lambda) \to W$ inverse to $\pi_1$.
\end{proof}

In the non-super setting, Proposition~\ref{prop:gen-rel-global} was proved in \cite[Prop.~4]{CFK10}.

\begin{prop} \label{prop:global-Weyl-universal}
  The global Weyl module $W(\lambda)$ is the unique object of $\I(\g\otimes A,\g_{\bar 0})$, up to isomorphism, that is generated by a highest weight vector of weight $\lambda$ and admits a surjective homomorphism to any object of $\I(\g\otimes A,\g_{\bar 0})$ also generated by a highest weight vector of weight $\lambda$.
\end{prop}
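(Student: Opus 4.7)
My approach is to leverage the generators-and-relations presentation from Proposition~\ref{prop:gen-rel-global} to establish that $W(\lambda)$ itself satisfies the stated universal property, and then to deduce uniqueness via a standard two-way surjection argument. The key steps are: (i) verify $W(\lambda) \in \I(\g\otimes A,\g_{\bar 0})$ and is generated by a highest weight vector of weight $\lambda$; (ii) construct, for any candidate $V$, a surjection $W(\lambda) \twoheadrightarrow V$; (iii) show that mutual surjections between cyclic modules generated by such vectors force an isomorphism.

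For (i), Lemma~\ref{Fou14-lem3.4} puts $P_A(\bar V(\lambda))$ in $\I(\g\otimes A,\g_{\bar 0})$, and since $\g_{\bar 0}$ is reductive its full subcategory of modules that decompose into finite-dimensional irreducibles is closed under $\g\otimes A$-quotients, so $W(\lambda)$ remains in $\I(\g\otimes A,\g_{\bar 0})$. Proposition~\ref{prop:gen-rel-global} tells us $w_\lambda$ generates $W(\lambda)$, is annihilated by $\n^+\otimes A$, and has weight $\lambda$, so it is a highest weight vector of weight $\lambda$. For (ii), let $V \in \I(\g\otimes A,\g_{\bar 0})$ be generated by a highest weight vector $v$ of weight $\lambda$. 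The two relations $(\n^+\otimes A)v = 0$ and $hv = \lambda(h)v$ hold by the very definition of a highest weight vector. For the integrability relation $Y_\alpha^{\lambda(H_\alpha)+1} v = 0$ with $\alpha \in \Sigma(\g_{\bar 0})$, I would observe that $V \in \I(\g\otimes A,\g_{\bar 0})$ implies $v$ lies in some finite-dimensional $\g_{\bar 0}$-submodule $V'$, inside which $v$ is a $\g_{\bar 0}$-highest weight vector of weight $\lambda|_{\h}$ (using $\n_{\bar 0}^+ \subseteq \n^+$); then standard finite-dimensional $\fsl(2)$-theory applied to the triple $\{X_\alpha, Y_\alpha, H_\alpha\}$ gives the desired vanishing. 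Proposition~\ref{prop:gen-rel-global} then produces the surjection $W(\lambda) \twoheadrightarrow V$ sending $w_\lambda \mapsto v$.

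For (iii), suppose $V$ is another object with the same universal property, and let $v \in V$ be its distinguished highest weight vector. Step (ii) supplies a surjection $\rho \colon W(\lambda) \twoheadrightarrow V$ with $\rho(w_\lambda) = v$. The universal property of $V$ applied to $W(\lambda)$ (which, by (i) and (ii), is itself generated by a highest weight vector of weight $\lambda$) supplies a surjection $\pi \colon V \twoheadrightarrow W(\lambda)$; since the surjections furnished by the universal property are naturally taken to send the distinguished generator of the source to the distinguished generator of the target, we may arrange $\pi(v) = w_\lambda$. The compositions $\pi\circ\rho$ and $\rho\circ\pi$ are then surjective endomorphisms fixing the respective cyclic generators, hence identities, yielding $V\cong W(\lambda)$. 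I expect the main obstacle to be the integrability step in (ii), where one must descend into the $\g_{\bar 0}$-action in order to invoke $\fsl(2)$-theory; the rest is formal universal-property manipulation.
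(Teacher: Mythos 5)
Your steps (i) and (ii) match the paper's argument: the existence of a surjection $W(\lambda)\twoheadrightarrow V$ follows exactly as you say, by checking that $v$ satisfies the relations \eqref{cha-fou-equ5} (the integrability relation coming from the fact that $v$ lies in a finite-dimensional $\g_{\bar 0}$-submodule) and invoking Proposition~\ref{prop:gen-rel-global}.

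Step (iii), however, has a genuine gap at the pivotal point. The hypothesis on the competing object $V$ only asserts the \emph{existence} of some surjection $\pi\colon V\twoheadrightarrow W(\lambda)$; it says nothing about where $\pi$ sends the distinguished generator $v$. Your phrase ``the surjections furnished by the universal property are naturally taken to send the distinguished generator of the source to the distinguished generator of the target'' is an extra assumption, not something you have proved, and without it the composite $\pi\circ\rho$ need not fix $w_\lambda$, so you cannot conclude it is the identity. (Note that mutual surjectivity alone does not give an isomorphism here, since these modules are infinite dimensional in general.) All you know a priori is that $\pi(v)$ is a weight-$\lambda$ vector that generates $W(\lambda)$, and the weight space $W(\lambda)_\lambda \cong U(\h\otimes A_+)w_\lambda$ is typically much larger than $\C w_\lambda$. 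The paper closes this gap by observing (via the PBW theorem) that the only elements of $W(\lambda)_\lambda$ that generate $W(\lambda)$ are the nonzero scalar multiples of $w_\lambda$; hence after rescaling one may indeed assume $\pi(v)=w_\lambda$, and only then does your two-way argument go through. You need to supply this normalization step (or an equivalent argument) for the uniqueness claim to be complete.
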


\begin{proof}
  Let $V\in \I(\g\otimes A,\g_{\bar 0})$ be generated by a highest weight vector $v$ of weight $\lambda$. Then
  \[
    (\n^+\otimes A)v=0,\quad hv=\lambda(h)v,\quad \text{for all } h\in\h.
  \]
  Since the $\g_{\bar 0}$-module generated by $v$ is finite-dimensional, we have that $Y_\alpha^{\lambda(H_\alpha)+1}v=0$ for all $\alpha \in \Sigma(\g_{\bar 0})$. Thus, by Proposition~\ref{prop:gen-rel-global}, we have a surjective homomorphism $W(\lambda) \twoheadrightarrow V$ such that $w_\lambda \mapsto  v$.

  Suppose that $W$ is another object of $\I(\g\otimes A,\g_{\bar 0})$ that is generated by a highest weight vector $w$ of weight $\lambda$ and admits a surjective homomorphism to any object of $\I(\g\otimes A,\g_{\bar 0})$ also generated by a highest weight vector of weight $\lambda$.  In particular, we have a surjective homomorphism $\pi_1 \colon W \twoheadrightarrow W(\lambda)$.  It follows from the PBW Theorem that $W(\lambda)_\lambda = U(\h \otimes A_+) \otimes_\C w_\lambda$.  The only elements of this weight space that generate $W(\lambda)$ are the $\C$-multiples of $w_\lambda$.  Thus, possibly after rescaling, we have $\pi_1(w)=w_\lambda$.  Now, as above, $w$ satisfies the relations \eqref{cha-fou-equ5}.  Thus there exists a homomorphism $\pi_2 \colon W(\lambda) \to W$ sending $w_\lambda$ to $w$.  It follows that $\pi_1$ and $\pi_2$ are mutually inverse homomorphisms, and so $W \cong W(\lambda)$.
\end{proof}

Note that, when $A = \C$, the global Weyl module $W(\lambda)$ coincides with the generalized Kac module $\bar V(\lambda)$.  In this case, Proposition~\ref{prop:global-Weyl-universal} reduces to the universal property given in Lemma~\ref{univ-prop-kac-mod}.

\section{Local Weyl modules} \label{sec:local-Weyl}

Recall that $\g$ is either a basic classical Lie superalgebra or $\fsl(n,n)$, $n \ge 2$, and that $A$ is an associative commutative unital $\C$-algebra.  The aim now is to describe, in terms of generators and relations, a universal object in the full subcategory of $\I(\g\otimes A,\g_{\bar 0})$ whose objects are the finite-dimensional modules generated by a highest map-weight vector of a fixed highest map-weight (see Definition~\ref{def:heighest-map-wieght}).

\begin{defn}[Local Weyl module] \label{def:local-Weyl}
  Let $\psi\in (\h\otimes A)^*$ such that $\lambda = \psi |_{\h} \in \Lambda^+$.  We define the \emph{local Weyl module} $W(\psi)$ associated to $\psi$ to be the $\g\otimes A$-module generated by a vector $w_\psi$ with defining relations
  \begin{equation}\label{relations-local}
    (\n^+\otimes A)w_\psi=0,\quad x w_\psi=\psi(x) w_\psi,\quad Y_\alpha^{\lambda(H_\alpha)+1} w_\psi=0,\quad \text{for all } x \in \h \otimes A,\ \alpha \in \Sigma(\g_{\bar 0}).
  \end{equation}
\end{defn}

\begin{defn}[Highest map-weight module] \label{def:heighest-map-wieght}
  A $\g\otimes A$-module generated by a vector $w_\psi$ satisfying the first and second relations of \eqref{relations-local} is called a \emph{highest map-weight module} with \emph{highest map-weight} $\psi$. The vector $w_\psi$ is called a \emph{highest map-weight vector} of \emph{map-weight} $\psi$.
\end{defn}

Recall that, for each $\alpha \in \Delta_{\bar 0}^+$, we have an $\fsl(2)$-triple $\{X_\alpha, Y_\alpha, H_\alpha\}$.

\begin{lemma}
  Suppose $\psi \in (\h \otimes A)^*$ such that $\lambda = \psi |_\h \in \Lambda^+$.  If $\alpha \in \Delta_{\bar 0}^+$, then $Y_\alpha^{\lambda(H_\alpha)+1} w_\psi=0$.
\end{lemma}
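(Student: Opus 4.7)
The plan is to reduce the statement to the finite-dimensional representation theory of $\g_{\bar 0}$. Specifically, I would consider the $\g_{\bar 0}$-submodule $M := U(\g_{\bar 0}) w_\psi$ of $W(\psi)$ and show that it is finite-dimensional; the result will then follow from standard $\fsl(2)$-theory applied to the $\fsl(2)$-triple $\{X_\alpha, Y_\alpha, H_\alpha\}$.

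To establish that $M$ is finite-dimensional, I would specialize the defining relations \eqref{relations-local} to elements of $\g \cong \g \otimes 1 \subseteq \g \otimes A$. This shows that $w_\psi$ satisfies $\n_{\bar 0}^+ w_\psi = 0$, $h w_\psi = \lambda(h) w_\psi$ for all $h \in \h$, and $Y_\beta^{\lambda(H_\beta)+1} w_\psi = 0$ for all $\beta \in \Sigma(\g_{\bar 0})$. Comparing with the presentation of the irreducible $\g_{\bar 0}$-module $L(\lambda)$ recalled in the proof of Proposition~\ref{prop:Vbar-fd}, these are exactly the defining relations of $L(\lambda)$, so there is a surjective $\g_{\bar 0}$-module homomorphism $L(\lambda) \twoheadrightarrow M$. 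Since $L(\lambda)$ is finite-dimensional, so is $M$.

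Now, for the given $\alpha \in \Delta_{\bar 0}^+$, I would examine the action of $\{X_\alpha, Y_\alpha, H_\alpha\}$ on the finite-dimensional module $M$. Since $X_\alpha \in \n^+ \cong \n^+ \otimes 1$, we have $X_\alpha w_\psi = 0$, while $H_\alpha w_\psi = \lambda(H_\alpha) w_\psi$. Thus the $\fsl(2)_\alpha$-submodule generated by $w_\psi$ is a finite-dimensional highest weight $\fsl(2)$-module of highest weight $\lambda(H_\alpha)$. By standard $\fsl(2)$-theory this forces $\lambda(H_\alpha) \in \N$ and $Y_\alpha^{\lambda(H_\alpha)+1} w_\psi = 0$, as required.

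The only mildly subtle step is the transition from simple even roots (where the relation is built into the definition of $W(\psi)$) to arbitrary positive even roots, and this is precisely the step that requires us to pass through the finite-dimensional module $M$ rather than reasoning only with the given relations. Beyond this, the argument is routine.
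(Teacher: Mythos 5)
Your proof is correct, but it takes a genuinely different route from the paper's. The paper argues by weights: $W(\psi)$ is a quotient of the global Weyl module $W(\lambda)$, which lies in $\I(\g\otimes A,\g_{\bar 0})$ by Lemma~\ref{Fou14-lem3.4}, so its set of weights is invariant under the Weyl group of $\g_{\bar 0}$; since $s_\alpha$ sends the weight $\lambda-(\lambda(H_\alpha)+1)\alpha$ of $Y_\alpha^{\lambda(H_\alpha)+1}w_\psi$ to $\lambda+\alpha$, which does not lie in $\lambda-Q^+$, that weight space vanishes. You instead stay entirely inside $W(\psi)$: restricting the defining relations \eqref{relations-local} to $\g\otimes 1$ recovers exactly the presentation of $L(\lambda)$ recalled in the proof of Proposition~\ref{prop:Vbar-fd}, so $U(\g_{\bar 0})w_\psi$ is a quotient of $L(\lambda)$ and hence finite-dimensional, and standard $\fsl(2)$-theory for the triple $\{X_\alpha,Y_\alpha,H_\alpha\}$ finishes the argument (with the degenerate case $w_\psi=0$ being trivial). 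Your version is more self-contained, since it avoids both the global Weyl module and the complete-reducibility input of Lemma~\ref{Fou14-lem3.4}, at the cost of leaning directly on the generators-and-relations description of $L(\lambda)$; the paper's version is shorter given the machinery already in place and matches the weight-theoretic arguments it uses elsewhere (e.g.\ in Proposition~\ref{prop:global-Weyl-universal}). Both are valid.
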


\begin{proof}
  The vector $Y_\alpha^{\lambda(H_\alpha)+1} w_\psi$ has weight $\lambda - (\lambda(H_\alpha)+1)\alpha$. On the other hand, it follows from Proposition~\ref{prop:gen-rel-global} that $W(\psi)$ is a quotient of the global Weyl module $W(\lambda)$, and so it is a direct sum of irreducible finite-dimensional $\g_{\bar 0}$-modules. This implies that the weights of $W(\lambda)$ are invariant under the action of the Weyl group of $\g_{\bar 0}$. But, if $s_\alpha$ denotes the reflection associated to the root $\alpha$, then $s_\alpha (\lambda - (\lambda(H_\alpha)+1)\alpha)=\lambda+\alpha$ does not lie below $\lambda$.  Therefore, $Y_\alpha^{\lambda(H_\alpha)+1} w_\psi=0$.
\end{proof}

Let $u$ be an indeterminate and, for $a\in A$, $\alpha \in \Delta_{\bar 0}^+$, define the following power series with coefficients in $U(\h\otimes A)$:
\begin{equation}\label{def-polis}
  p(a,\alpha) = \exp \left( -\sum_{i=1}^\infty \frac{H_\alpha\otimes a^i}{i} u^i\right).
\end{equation}
For $i \in \N$, let $p(a,\alpha)_i$ denote the coefficient of $u^i$ in $p(a,\alpha)$. In particular, $p(a,\alpha)_0=1$.

\begin{lemma}\label{lemm5-chari}
  Suppose $m \in \N$, $a\in A$, and $\alpha\in \Delta_{\bar 0}^+$.  Then
  \begin{equation}\label{lemm5-chari-eq1}
    (X_\alpha\otimes a)^m (Y_\alpha\otimes 1)^{m+1} - (-1)^m \sum_{i=0}^m (Y_\alpha\otimes a^{m-i}) p(a,\alpha)_i \in U(\g\otimes A)(\n^+ \otimes A).
  \end{equation}
\end{lemma}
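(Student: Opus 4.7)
The plan is to reduce the identity to one inside the ordinary (non-super) enveloping algebra $U(\fsl(2)\otimes A)$ and then establish it by induction on $m$, following the classical Garland-type argument.

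\textbf{Reduction to $\fsl(2)\otimes A$.} Since $\alpha\in\Delta_{\bar 0}^+$, the triple $\{X_\alpha,Y_\alpha,H_\alpha\}$ lies entirely in $\g_{\bar 0}$ and spans a copy of $\fsl(2)$. Every product and commutator appearing in~\eqref{lemm5-chari-eq1} therefore involves only even elements of $\g\otimes A$; the super signs collapse to ordinary signs, and the identity may be verified inside $U(\fsl(2)\otimes A)$. In this setting the statement is a variant of the classical Garland identity, and one can adapt the classical proof with essentially no change (cf.~\cite{CP01} and \cite{CFK10}).

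\textbf{Induction on $m$.} The base case $m=0$ is immediate since $p(a,\alpha)_0=1$. For the inductive step, I would write
\[
  (X_\alpha\otimes a)^m(Y_\alpha\otimes 1)^{m+1} = (X_\alpha\otimes a)^{m-1}\,(X_\alpha\otimes a)(Y_\alpha\otimes 1)^{m+1}
\]
and commute the single factor $(X_\alpha\otimes a)$ through the string $(Y_\alpha\otimes 1)^{m+1}$ using the $\fsl(2)$-relations $[X_\alpha\otimes a, Y_\alpha\otimes 1]=H_\alpha\otimes a$ and $[H_\alpha\otimes a^j, Y_\alpha\otimes 1]=-2\,Y_\alpha\otimes a^j$. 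The expansion produces a tail $(Y_\alpha\otimes 1)^{m+1}(X_\alpha\otimes a)\in U(\g\otimes A)(\n^+\otimes A)$ (which is discarded), together with lower-order terms that, after sweeping each $H_\alpha\otimes a^j$ to the right, can be rewritten via the inductive hypothesis applied to $(X_\alpha\otimes a)^{m-1}(Y_\alpha\otimes 1)^m$.

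\textbf{Matching with $p(a,\alpha)$ and main obstacle.} Differentiating $\log p(a,\alpha)$ in the indeterminate $u$ yields the recursion
\[
  i\,p(a,\alpha)_i = -\sum_{j=1}^{i}(H_\alpha\otimes a^j)\,p(a,\alpha)_{i-j},\qquad i\geq 1,
\]
which is exactly the recursion produced on the right-hand side of~\eqref{lemm5-chari-eq1} by the inductive step. The main obstacle is therefore not conceptual but combinatorial: one must carefully align the bookkeeping of commutators with the exponential generating function $p(a,\alpha)$, verifying that the coefficients produced by commuting through $(Y_\alpha\otimes 1)^{m+1}$ reassemble into the sum $\sum_{i=0}^m(Y_\alpha\otimes a^{m-i})\,p(a,\alpha)_i$. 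Once this alignment is identified, the induction proceeds routinely, exactly as in the non-super case.
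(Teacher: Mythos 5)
Your reduction to the even $\fsl(2)$-triple is exactly right, and your identification of the recursion $i\,p(a,\alpha)_i=-\sum_{j=1}^i (H_\alpha\otimes a^j)p(a,\alpha)_{i-j}$ is correct; the overall strategy (a Garland-type induction) would work. But you take a different, and in one respect less economical, route than the paper. The paper does not rerun the induction over a general algebra $A$ at all: it observes that the proof of \cite[Lem.~1.3(ii)]{CP01}, stated for $A=\C[t^{\pm1}]$, nowhere uses invertibility of $t$ and hence already gives the identity in $U(\fsl(2)\otimes\C[t])$; it then applies the Lie algebra homomorphism $\fsl(2)\otimes\C[t]\to\fsl(2)\otimes A$ induced by $t\mapsto a$, which sends $X_\alpha\otimes t^k\mapsto X_\alpha\otimes a^k$ and carries $U(\fsl(2)\otimes\C[t])(\n^+\otimes\C[t])$ into $U(\g\otimes A)(\n^+\otimes A)$. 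This specialization argument transfers the entire combinatorial content of the classical lemma in one stroke. By contrast, your proposal defers precisely that combinatorial content: the inductive step is only sketched, and the ``alignment'' of the commutator bookkeeping with the coefficients of $p(a,\alpha)$ is asserted rather than verified. That verification is the whole substance of the classical lemma, so as written your argument is not self-contained; either carry out the induction in full (it is routine but genuinely fiddly) or, better, adopt the specialization trick, which reduces the general case to the single universal identity over $\C[t]$ already available in the literature.
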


\begin{proof}
  This formula is proved in \cite[Lem.~1.3(ii)]{CP01} in the case that $A$ is $\C[t^{\pm1}]$.
  \details{
    The translation between the notation of \cite{CP01} and the notation we use here is given by
    \[
      x_{\alpha,k}^+ \leftrightarrow X_\alpha\otimes t^k,\quad x_{\alpha,k}^- \leftrightarrow Y_\alpha\otimes t^k,\quad h_{\alpha,k} \leftrightarrow H_\alpha \otimes t^k,\quad \text{ and } \Lambda_{\alpha,k} \leftrightarrow p(t,\alpha)_k,\quad \alpha \in \Delta_{\bar 0}^+,\ k \in \N.
    \]
    Furthermore, we also have that $U \leftrightarrow U(\g\otimes A)$ and $U(>)_+$ denotes the augmentation ideal of $U(\n^+\otimes A)$. Thus $U(>)_+\subseteq U(\g\otimes A) (\n^+\otimes A)$.
  }
  However, since the fact that $t$ is an invertible element in $\C[t^{\pm 1}]$ is not used in that proof, the result is still true when $A$ is equal to $\C[t]$. Now, applying the Lie algebra homomorphism
  \[
    \fsl(2)\otimes \C[t]\rightarrow \fsl(2)\otimes A,\quad x\otimes t^m \mapsto x\otimes a^m,\quad m\in\N,\ x\in\fsl(2),
  \]
  gives our result.
\end{proof}

For the remainder of the paper we assume that
\begin{center}
  $A$ is finitely generated.
\end{center}

\begin{prop} \label{fin-dim-even}
  Suppose $\psi \in (\h \otimes A)^*$ such that $\lambda = \psi |_\h \in \Lambda^+$.  If $\alpha \in \Delta_{\bar 0}^+$, $a_1,a_2,\dotsc,a_t \in A$, and $m_1,\dotsc,m_t \in \N$, then
  \begin{equation} \label{eq:falpha-span-inclusion}
    (Y_\alpha\otimes a_1^{m_1} \dotsm a_t^{m_t}) w_\psi \in \vspan_\C\ \{(Y_\alpha\otimes a_1^{\ell_1}\dotsm a_t^{\ell_t})w_\psi \mid 0\leq \ell_i <\lambda(H_\alpha),\ i=1,\dotsc, t \}.
  \end{equation}
  In particular, $(Y_\alpha\otimes A) w_\psi$ is finite dimensional.
\end{prop}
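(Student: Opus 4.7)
Let $k := \lambda(H_\alpha)$ and let $F$ denote the $\C$-span appearing on the right-hand side of~\eqref{eq:falpha-span-inclusion}. The strategy is to apply Lemma~\ref{lemm5-chari} to $w_\psi$ for each integer $m \ge k$, polarize the resulting identity via the substitution $a \mapsto \sum_{j=1}^t c_j a_j$ with $c_1,\dotsc,c_t \in \C$ treated as free parameters, and then extract the coefficient of the monomial $c^{\mathbf{m}} = c_1^{m_1}\dotsm c_t^{m_t}$.

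First I would observe that for $m \ge k$ the term $(X_\alpha\otimes a)^m(Y_\alpha\otimes 1)^{m+1}$ kills $w_\psi$, since $(Y_\alpha\otimes 1)^{m+1}w_\psi = Y_\alpha^{m+1}w_\psi = 0$ by~\eqref{relations-local}, while the error term in $U(\g\otimes A)(\n^+\otimes A)$ also annihilates $w_\psi$ by the first relation in~\eqref{relations-local}. Since $p(a,\alpha)_i \in U(\h\otimes A)$ acts on $w_\psi$ by a scalar $q_i(a) \in \C$ with $q_0(a)=1$ (by the second relation in~\eqref{relations-local}), Lemma~\ref{lemm5-chari} yields
\[
  \sum_{i=0}^{m} q_i(a)\bigl(Y_\alpha\otimes a^{m-i}\bigr)w_\psi = 0,\qquad m\ge k.
\]

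Substituting $a = \sum_{j=1}^t c_j a_j$ and inspecting~\eqref{def-polis}, one checks that $q_i(a)$ is a polynomial in the $c_j$'s of total degree $i$ (each factor $H_\alpha\otimes a^l$ appearing in $p(a,\alpha)_i$ contributes degree $l$, and the indices must sum to $i$), while the multinomial formula makes $(Y_\alpha\otimes a^{m-i})w_\psi$ homogeneous of degree $m-i$ in $c$. Extracting the coefficient of $c^{\mathbf{m}}$ with $m_1+\dotsb+m_t=m$ from the identity above, the $i=0$ summand contributes $\binom{m}{\mathbf{m}}(Y_\alpha\otimes a_1^{m_1}\dotsm a_t^{m_t})w_\psi$, while the summands with $i\ge 1$ contribute linear combinations of vectors $(Y_\alpha\otimes a_1^{n_1}\dotsm a_t^{n_t})w_\psi$ with $n_1+\dotsb+n_t<m$. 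Dividing by the nonzero multinomial coefficient gives the reduction
\[
  \bigl(Y_\alpha\otimes a_1^{m_1}\dotsm a_t^{m_t}\bigr)w_\psi \in \vspan_\C\bigl\{\bigl(Y_\alpha\otimes a_1^{n_1}\dotsm a_t^{n_t}\bigr)w_\psi : n_1+\dotsb+n_t<m\bigr\}
\]
whenever $m_1+\dotsb+m_t = m \ge k$.

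Induction on $m_1+\dotsb+m_t$ (the case $m_1+\dotsb+m_t<k$ being tautological, since then every $m_i<k$) then reduces any such vector to a combination of elements indexed by tuples $(n_1,\dotsc,n_t)$ with $n_1+\dotsb+n_t<k$, each of which in particular has every coordinate $<k$; this establishes~\eqref{eq:falpha-span-inclusion}. For the finiteness claim, choose $a_1,\dotsc,a_t$ to be $\C$-algebra generators of $A$, so that $A$ is $\C$-spanned by the monomials $a_1^{m_1}\dotsm a_t^{m_t}$; then $(Y_\alpha\otimes A)w_\psi \subseteq F$, a space with at most $k^t$ spanning vectors. The main bookkeeping point is the degree calculation for $q_i(a)$ in the polarization step, which is a direct consequence of the exponential structure in~\eqref{def-polis}; once that is in hand the rest is routine coefficient extraction.
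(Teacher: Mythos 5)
Your proof is correct, and while the first step (deriving $\sum_{i=0}^m q_i(a)(Y_\alpha\otimes a^{m-i})w_\psi=0$ for $m\ge\lambda(H_\alpha)$ from Lemma~\ref{lemm5-chari} and then reducing powers of a single element) coincides with the paper's, your treatment of the multi-variable case is genuinely different. The paper proceeds by induction on $t$: it applies $h\otimes a_{t+1}^{m_{t+1}}$ (with $\alpha(h)\neq 0$) to $(Y_\alpha\otimes a_1^{m_1}\dotsm a_t^{m_t})w_\psi$, uses the commutator to introduce the new factor $a_{t+1}^{m_{t+1}}$, and then interchanges the roles of $m_1$ and $m_{t+1}$ to bound every exponent; this needs only the single-variable reduction as input and no expansion of $p(a,\alpha)_i$. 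You instead polarize the Garland-type identity itself, substituting $a=\sum_j c_j a_j$ and extracting the coefficient of $c_1^{m_1}\dotsm c_t^{m_t}$; the key degree count ($q_i$ homogeneous of degree $i$ in $c$, $(Y_\alpha\otimes a^{m-i})w_\psi$ homogeneous of degree $m-i$) is right, and the single induction on total degree $m_1+\dotsb+m_t$ closes the argument. Your route actually yields a slightly sharper conclusion — the vector lies in the span of monomials of \emph{total} degree $<\lambda(H_\alpha)$, giving a spanning set of size $\binom{\lambda(H_\alpha)+t-1}{t}$ rather than $\lambda(H_\alpha)^t$ — at the cost of the multinomial bookkeeping. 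One cosmetic point: the vanishing of $Y_\alpha^{\lambda(H_\alpha)+1}w_\psi$ for a general $\alpha\in\Delta_{\bar 0}^+$ is not literally one of the relations \eqref{relations-local} (which concern only $\alpha\in\Sigma(\g_{\bar 0})$); it is supplied by the lemma preceding Proposition~\ref{fin-dim-even}, which you should cite instead.
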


\begin{proof}
  From the first and third relations in \eqref{relations-local}, together with \eqref{lemm5-chari-eq1}, it follows that, for $a \in A$ and $m \ge \lambda(H_\alpha)$, we have
  \[
    0=(X_\alpha\otimes a)^m(Y_\alpha\otimes 1)^{m+1} w_\psi = \sum_{i=0}^m (-1)^m (Y_\alpha\otimes a^{m-i})p(a,\alpha)_i w_\psi,
  \]
  for any $a\in A$. Since $p(a,\alpha)_0=1$, we have
  \[
    (Y_\alpha \otimes a^m)w_\psi\in \vspan_\C\{(Y_\alpha\otimes a^\ell)w_\psi \mid 0\leq\ell < m\}.
  \]
  This implies, by induction, that
  \begin{equation}\label{eq5.4-fou-sav}
    (Y_\alpha\otimes a^{m})w_\psi\in \vspan_\C\{(Y_\alpha\otimes a^\ell)w_\psi \mid 0\leq\ell <\lambda(H_\alpha)\},\quad \text{for all }m\in \N,\,a\in A.
  \end{equation}

  We will now prove \eqref{eq:falpha-span-inclusion} by induction on $t$.  The case $t=1$ follows immediately from \eqref{eq5.4-fou-sav}.  Assume that \eqref{eq:falpha-span-inclusion} holds for some $t \ge 1$.  Let $m_1,\dotsc,m_{t+1} \in \N$ and choose $h \in \h$ such that $\alpha(h) \ne 0$.  Then
  \[
    (h\otimes a_{t+1}^{m_{t+1}})(Y_\alpha \otimes a_1^{m_1} \dotsm a_t^{m_t} )w_\psi = \big( -\alpha(h)(Y_\alpha\otimes a_1^{m_1} \dotsm a_{t+1}^{m_{t+1}}) +(Y_\alpha \otimes a_1^{m_1} \dotsm a_t^{m_t})(h\otimes a_{t+1}^{m_{t+1}}) \big) w_\psi,
  \]
  and so
  \begin{equation} \label{eq5.5-fou-sav}
    (h\otimes a_{t+1}^{m_{t+1}})(Y_\alpha \otimes a_1^{m_1} \dotsm a_t^{m_t} )w_\psi + \alpha(h)(Y_\alpha\otimes a_1^{m_1} \dotsm a_{t+1}^{m_{t+1}}) w_\psi \in \vspan_\C\{(Y_\alpha\otimes a_1^{m_1} \dotsm a_t^{m_t} )w_\psi\},
  \end{equation}
  since $(h\otimes a_{t+1}^{m_{t+1}}) w_\psi \in \C w_\psi$.  By the inductive hypothesis, we have
  \[
    (Y_\alpha \otimes a_1^{m_1} \dotsm a_{t+1}^{m_{t+1}}) w_\psi \in \vspan_\C \left\{ (h \otimes a_{t+1}^{m_{t+1}}) (Y_\alpha \otimes a_1^{\ell_1} \dotsm a_t^{\ell_t}) w_\psi, (Y_\alpha \otimes a_1^{\ell_1} \dotsm a_t^{\ell_t}) w_\psi \mid 0 \le \ell_i < \lambda(H_\alpha) \right\}.
  \]
  Then, by \eqref{eq5.5-fou-sav} (with $m_i = \ell_i$ for $i=1,\dotsc,t$), we have
  \[
    (Y_\alpha \otimes a_1^{m_1} \dotsm a_{t+1}^{m_{t+1}}) w_\psi \in \vspan_\C \left\{ (Y_\alpha \otimes a_1^{\ell_1} \dotsm a_t^{\ell_t} a_{t+1}^{m_{t+1}}) w_\psi, (Y_\alpha \otimes a_1^{\ell_1} \dotsm a_t^{\ell_t}) w_\psi \mid 0 \le \ell_i < \lambda(H_\alpha) \right\}.
  \]
  Since the above inclusion holds for all $m_1,\dotsc,m_{t+1} \in \N$, we can interchange the roles of $m_1$ and $m_{t+1}$ to obtain
  \[
    (Y_\alpha \otimes a_1^{m_1} \dotsm a_{t+1}^{m_{t+1}}) w_\psi \in \vspan_\C \left\{ (Y_\alpha \otimes a_1^{\ell_1} \dotsm a_t^{\ell_t} a_{t+1}^{\ell_{t+1}}) w_\psi \mid 0 \le \ell_i < \lambda(H_\alpha) \right\}.
  \]
  This completes the proof of the inductive step.  The final statement of the lemma follows from the fact that $A$ is finitely generated.
\end{proof}

Let
\[
  \cL (\h\otimes A)=\{\psi\in (\h\otimes A)^* \mid \psi(\h\otimes I)=0, \text{ for some finite-codimensional ideal $I$ of $A$}\}.
\]

\begin{prop}\label{trivial-weyl}
  Suppose $\psi \in (\h \otimes A)^*$ such that $\lambda = \psi |_\h \in \Lambda^+$.   If $\psi \not \in \mathcal{L}(\mathfrak{h} \otimes A)$, then $W(\psi)=0$.
\end{prop}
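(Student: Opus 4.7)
The plan is to argue by contrapositive: assume $w_\psi \neq 0$ in $W(\psi)$, and produce a finite-codimensional ideal $I \subseteq A$ with $\psi(\h \otimes I) = 0$, contradicting the hypothesis $\psi \notin \mathcal{L}(\h \otimes A)$. The crucial input is Proposition~\ref{fin-dim-even}: for each $\alpha \in \Delta_{\bar 0}^+$, the set $(Y_\alpha \otimes A) w_\psi$ is finite-dimensional, so $K_\alpha := \{a \in A \mid (Y_\alpha \otimes a) w_\psi = 0\}$ has finite codimension in $A$.

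First, for each $\alpha \in \Delta_{\bar 0}^+$, I would verify that $K_\alpha$ is actually a (two-sided) ideal: picking $h \in \h$ with $\alpha(h) \neq 0$ and acting on $(Y_\alpha \otimes a) w_\psi = 0$ by $h \otimes b$ (while using the weight relation $(h \otimes b) w_\psi = \psi(h \otimes b) w_\psi$), the resulting identity forces $(Y_\alpha \otimes ab) w_\psi = 0$. Next, for $a \in K_\alpha$ and any $b \in A$, both $(X_\alpha \otimes b)(Y_\alpha \otimes a) w_\psi$ and $(Y_\alpha \otimes a)(X_\alpha \otimes b) w_\psi$ vanish (the former because $(Y_\alpha \otimes a) w_\psi = 0$, the latter because $X_\alpha \otimes b \in \n^+ \otimes A$ annihilates $w_\psi$); combined with $[X_\alpha, Y_\alpha] = H_\alpha$, this yields $\psi(H_\alpha \otimes K_\alpha) = 0$.

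To extend the vanishing from the span of $\{H_\alpha : \alpha \in \Delta_{\bar 0}^+\}$ to all of $\h$, I must also treat the odd simple coroots, since in the type I and $\fsl(n,n)$ settings the even coroots alone do not span $\h$. For $\alpha \in \Sigma_{\bar 1}$, I would invoke condition \eqref{eq:system-key-propery} to choose $\alpha' \in \Delta_{\bar 1}^+$ with $\alpha + \alpha' \in \Delta_{\bar 0}^+$; by Remark~\ref{rmk:dim1-root-space}, $Y_\alpha$ is a nonzero scalar multiple of $[X_{\alpha'}, Y_{\alpha + \alpha'}]$, and since $(X_{\alpha'} \otimes 1) w_\psi = 0$, we get $(Y_\alpha \otimes a) w_\psi$ as a scalar multiple of $(X_{\alpha'} \otimes 1)(Y_{\alpha + \alpha'} \otimes a) w_\psi$. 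This gives the inclusion $K_{\alpha + \alpha'} \subseteq K_\alpha$, so $K_\alpha$ is a finite-codimensional ideal, and the super-bracket version of the previous computation again yields $\psi(H_\alpha \otimes K_\alpha) = 0$.

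Finally, I would set $I = \bigcap_{\alpha \in \Sigma} K_\alpha$ — a finite intersection of finite-codimensional ideals, hence itself such — and observe that $\psi(H_\alpha \otimes I) = 0$ for every $\alpha \in \Sigma$. Since $\h = \vspan\{H_\alpha : \alpha \in \Sigma\}$ in all cases under consideration (including the $\h'$ and $\h'/C$ conventions used for $\fsl(n,n)$ and $A(n,n)$), we obtain $\psi(\h \otimes I) = 0$, completing the contradiction. The main obstacle I anticipate is the passage to odd simple coroots: Proposition~\ref{fin-dim-even} supplies finite-dimensionality only for even roots, so covering the odd simple coroots — which are indispensable for spanning $\h$ whenever $\g_{\bar 0}$ has a nontrivial center — relies on the structural input from \eqref{eq:system-key-propery}.
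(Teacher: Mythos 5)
Your argument is correct in outline and shares its engine with the paper's proof --- both start from Proposition~\ref{fin-dim-even}, extract for each $\alpha\in\Delta_{\bar 0}^+$ a finite-codimensional ideal annihilating $w_\psi$ under $Y_\alpha\otimes(\cdot)$, verify the ideal property by bracketing with $h\otimes b$ for $\alpha(h)\neq 0$, and transfer the vanishing to $\h$ via brackets with $\n^+\otimes A$ --- but the two proofs diverge at the transfer step. The paper intersects its ideals over all of $\Delta_{\bar 0}^+$ and then asserts $\h\otimes I\subseteq[\n^{+}\otimes A,\n_{\bar 0}^{-}\otimes I]$, which amounts to saying that the even coroots span $\h$; you instead run over the simple coroots, including the odd ones, reaching those through condition \eqref{eq:system-key-propery}. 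Your instinct here is sound: in type I and for $\fsl(n,n)$ the even coroots span only $\h\cap[\g_{\bar 0},\g_{\bar 0}]$ and miss the center of $\g_{\bar 0}$, so the detour through odd simple roots is genuinely doing work that the paper's one-line inclusion glosses over.

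Two caveats. First, your reduction of an odd simple root $\alpha$ to the even root $\beta=\alpha+\alpha'$ rests on $\dim\g_{-\alpha}=1$, which Remark~\ref{rmk:dim1-root-space} itself excludes for $A(1,1)$ and $\fsl(2,2)$: there the odd root spaces are two-dimensional and $[X_{\alpha'},Y_{\beta}]$ need not be proportional to your chosen $Y_\alpha$. The paper confronts exactly this issue in Lemma~\ref{fin.dim.odd} by an explicit computation in $\gl(2,2)$ (choosing $Y_\alpha=[E_{k,\ell},E_{p,q}]$), and you would need the same patch. Second, condition \eqref{eq:system-key-propery} is not yet a standing hypothesis at the point where Proposition~\ref{trivial-weyl} appears (it is imposed only afterwards), so as written your proof establishes the statement only for systems of simple roots satisfying it; this is a restriction of scope rather than an error, but it should be flagged.
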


\begin{proof}
  Let $\alpha \in \Delta_{\bar 0}^+$ and let $I_{\alpha}$ be the kernel of the linear map
  \begin{gather*}
    A \to \Hom_\C \left( W(\psi)_\lambda \otimes \g_{-\alpha},(\g_{-\alpha}\otimes A)w_\psi \right),\\
    a\mapsto (v\otimes u\mapsto(u\otimes a)v),\quad a\in A,\ v\in W(\psi)_\lambda,\ u\in \g_{-\alpha}.
  \end{gather*}
  Since $\g_{-\alpha} = \C Y_\alpha$, Proposition~\ref{fin-dim-even} implies that $(\g_{-\alpha}\otimes A)w_\psi$ is finite dimensional. Thus, $I_{\alpha}$ is a linear subspace of $A$ of finite codimension. We claim that $I_{\alpha}$ is, in fact, an ideal of A. Indeed, since $\alpha\neq 0$, we can choose $h\in\h$ such that $\alpha(h)\neq 0$. Then, for all $g\in A$, $a\in I_{\alpha}$, $v\in W(\psi)_{\lambda}$, and $u\in \g_{-\alpha}$, we have
  \[
    0 = (h\otimes g)(u\otimes a)v
    =  [h\otimes g,u\otimes a]v + (u\otimes a)(h\otimes g)v
    =  -\alpha(h)(u\otimes ga)v + (u\otimes a)(h\otimes g)v.
  \]
  Since $(h\otimes g)v\in W(\psi)_{\lambda}$ and $a\in I_{\alpha}$, the last term above is zero. Since we also have $\alpha(h)\neq 0$, this implies that $(u\otimes ga)v=0$. As this holds for all $v\in W(\psi)_{\lambda}$ and $u\in \g_{-\alpha}$, we have $ga\in I_{\alpha}$. Hence $I_{\alpha}$ is an ideal of $A$.

  Let $I$ be the intersection of all the $I_{\alpha}$, $\alpha \in \Delta_{\bar 0}^+$.  Since $\g$ has a finite number of positive roots, this intersection is finite, and thus $I$ is also an ideal of $A$ of finite-codimension.  We have
  \[
    (\mathfrak{n}_{\bar 0}^- \otimes I)W(\psi)_{\lambda}=0 \quad \text{and} \quad (\mathfrak{n}^+ \otimes A)W(\psi)_{\lambda}=0.
  \]
  Then, since $\h\otimes I\subseteq [\n^{+}\otimes A,\n_{\bar 0}^{-}\otimes I]$, we have $(\h\otimes I)W(\psi)_{\lambda}=0$. In particular, $(\mathfrak{h} \otimes I)w_\psi=0$.

  Assume $\psi\notin \cL (\h\otimes A)$. Then there exists $a\in I$ such that $\psi(h\otimes a)\ne 0$ for some $h\in\h$, which implies that $w_\psi=0$, since
  \[
    0=(h\otimes a)w_\psi=\psi(h\otimes a)w_\psi.
  \]
  Therefore $W(\psi)=0$.
\end{proof}

\begin{defn}[The ideal $I_\psi$] \label{def:updated}
  For $\psi \in (\h \otimes A)^*$ with $\psi |_\h \in \Lambda^+$, let $I_\psi$ be the sum of all ideals $I\subseteq A$ such that $(\mathfrak{n}_{\bar 0}^- \otimes I)w_\psi=0$.
\end{defn}

\begin{rmk}\label{rem:ann-even-root}
  It follows from the proof of Proposition~\ref{trivial-weyl} that $I_\psi$ has finite codimension in $A$ and that $(\h\otimes I_\psi)w_\psi=0$. Furthermore, by Lemma~\ref{lem:assoc-alg-facts}, parts \eqref{lem-item:support-power} and \eqref{lem-item:finiteCod-and-finiteSupp}, since $I_\psi$ has finite codimension and $A$ is finitely generated, we have that $I_{\psi}^N$ has finite codimension, for all $N \in \N$.
\end{rmk}

For the remainder of the paper, we assume that
\begin{center}
  $\Sigma$ is a system of simple roots for $\g$ satisfying \eqref{eq:system-key-propery}.
\end{center}
Recall that, by Proposition~\ref{prop:nice-system-exists}, such a system always exists under our current assumption that $\g$ is either a basic classical Lie superalgebra or $\fsl(n,n)$, $n \ge 2$.

\begin{lemma} \label{fin.dim.odd}
  Suppose $\psi \in (\h \otimes A)^*$ with $\psi |_\h \in \Lambda^+$.  Then there exists $N_\psi\in\N$ such that
  \[
    \left( \n^-\otimes I_\psi^{N_\psi} \right) w_\psi=0.
  \]
\end{lemma}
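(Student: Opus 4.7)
The plan is to prove, by induction on $\hgt(\beta)$, the refined claim that for every $\beta \in \Delta^+$ and every $Y \in \g_{-\beta}$ one has $(Y \otimes I_\psi^{\hgt(\beta)}) w_\psi = 0$. Setting $N_\psi := \max\{\hgt(\beta) : \beta \in \Delta^+\}$, which is finite since $\g$ is finite-dimensional, then yields the lemma.

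For the base case $\hgt(\beta) = 1$, we must show $(Y_\alpha \otimes I_\psi) w_\psi = 0$ for each $\alpha \in \Sigma$. If $\alpha \in \Sigma_{\bar 0}$, this is immediate from Remark~\ref{rem:ann-even-root}. If $\alpha \in \Sigma_{\bar 1}$, property \eqref{eq:system-key-propery} supplies $\alpha' \in \Delta_{\bar 1}^+$ with $\gamma := \alpha + \alpha' \in \Delta_{\bar 0}^+$; by Remark~\ref{rmk:dim1-root-space}, $[Y_\gamma, X_{\alpha'}]$ is a nonzero element of the (one-dimensional) space $\g_{-\alpha}$, hence a nonzero scalar multiple of $Y_\alpha$. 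For $a \in I_\psi$ one can then write $Y_\alpha \otimes a$ as a scalar multiple of $[Y_\gamma \otimes a,\, X_{\alpha'} \otimes 1]$, and expanding this super-commutator annihilates $w_\psi$: the summand $(Y_\gamma \otimes a)(X_{\alpha'} \otimes 1) w_\psi$ vanishes because $(X_{\alpha'} \otimes 1) w_\psi = 0$ by the first defining relation of $W(\psi)$, while $(X_{\alpha'} \otimes 1)(Y_\gamma \otimes a) w_\psi$ vanishes because $(Y_\gamma \otimes a) w_\psi = 0$ by Remark~\ref{rem:ann-even-root}.

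For the inductive step, assume $\hgt(\beta) \ge 2$. Since $\n^-$ is generated as a Lie superalgebra by $\{Y_\alpha : \alpha \in \Sigma\}$, the root space $\g_{-\beta}$ is spanned by left-normed super-brackets $[Y_\alpha, Y']$ with $\alpha \in \Sigma$ and $Y' \in \g_{-\beta'}$, where $\beta' := \beta - \alpha \in \Delta^+$ has height $\hgt(\beta) - 1$. Any $a \in I_\psi^{\hgt(\beta)}$ is a finite sum of products $a' b'$ with $a' \in I_\psi$ and $b' \in I_\psi^{\hgt(\beta')}$, and the identity
\[
  [Y_\alpha \otimes a',\, Y' \otimes b'] = [Y_\alpha, Y'] \otimes a' b'
\]
reduces the claim, after expanding the super-commutator, to the fact that both $(Y_\alpha \otimes a') w_\psi$ and $(Y' \otimes b') w_\psi$ vanish -- the former by the base case, the latter by the induction hypothesis.

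The only genuinely new ingredient, and thus the main obstacle, is the odd simple-root case of the base step: Remark~\ref{rem:ann-even-root} does not cover it, and property \eqref{eq:system-key-propery} of $\Sigma$ is precisely what is required to realize each odd simple $Y_\alpha \otimes a$ as a super-commutator of operators that already annihilate $w_\psi$. Once this step is in place, the propagation through $\n^-$ via iterated brackets -- picking up one factor of $I_\psi$ at each level -- is a routine induction.
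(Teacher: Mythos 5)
Your overall strategy is exactly the paper's: establish $(Y_\alpha\otimes I_\psi)w_\psi=0$ for simple $\alpha$ (using Remark~\ref{rem:ann-even-root} for even roots and \eqref{eq:system-key-propery} plus the highest map-weight relations for odd ones), then propagate through $\n^-$ by induction on height, gaining one factor of $I_\psi$ per bracket. The inductive step and the even base case are fine.

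There is, however, a gap in the odd base case. You invoke Remark~\ref{rmk:dim1-root-space} to conclude that $[Y_\gamma, X_{\alpha'}]$ is a nonzero element of a one-dimensional root space $\g_{-\alpha}$, but that remark explicitly excludes $\g = A(1,1)$ from the one-dimensionality conclusion and excludes $\g = \fsl(2,2)$ from its hypotheses altogether. Both algebras are within the scope of the lemma ($A(1,1)$ is basic, and $\fsl(n,n)$ with $n\ge 2$ is explicitly included), and in both of them the odd root spaces are two-dimensional (on $\h'$ the functionals $\epsilon_1-\epsilon_3$ and $\epsilon_4-\epsilon_2$ coincide, so the corresponding matrix units sit in the same root space). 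Consequently $[Y_\gamma, X_{\alpha'}]$ need not be a scalar multiple of a previously fixed generator $Y_\alpha$, need not span $\g_{-\alpha}$, and its nonvanishing is not guaranteed by the remark; moreover your refined claim ``for every $Y\in\g_{-\beta}$'' would require annihilating the entire two-dimensional space, not just one vector in it. The paper closes this by a separate explicit computation in $\gl(2,2)$: it identifies $\g_{\alpha'}=\C E_{k,\ell}$ and $\g_{-\beta}=\C E_{p,q}$ and then \emph{chooses} the generator $Y_\alpha$ of the base to be (the image of) $[E_{k,\ell},E_{p,q}]$, so that the commutator argument applies by construction. You need some such supplementary argument for $A(1,1)$ and $\fsl(2,2)$ before the base case, and hence the lemma, is complete.
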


\begin{proof}
  Recall the set $\{Y_\alpha \mid \alpha \in \Sigma\}$ of generators of $\n^-$.  We claim that
  \begin{equation} \label{eq:Ybeta-kill-claim}
    (Y_\alpha \otimes I_\psi) w_\psi=0, \quad \text{for all } \alpha \in \Sigma.
  \end{equation}
   By the definition of $I_\psi$, it suffices to consider the case $\alpha \in \Sigma_{\bar 1}$.  Fix such an $\alpha$.  By \eqref{eq:system-key-propery}, there exists $\alpha' \in \Delta_{\bar 1}$ such that $\beta := \alpha + \alpha' \in \Delta_{\bar 0}^+$.

  First suppose $\g$ is not $A(1,1)$ or $\fsl(2,2)$.  Then $\dim \g_\nu=1$ for any $\nu \in \Delta$ (see Remark~\ref{rmk:dim1-root-space}).  Thus, rescaling if necessary,
  \begin{equation}\label{eq:rmk-on-A(1,1)}
    [X_{\alpha'}, Y_\beta] = Y_\alpha.
  \end{equation}
  \details{
    Here we use the fact that $[\g_{\alpha'},\g_{\alpha''}] = \g_{\alpha' + \alpha''}$ if $\alpha', \alpha'', \alpha' + \alpha'' \in \Delta$.
  }
  Then,
  \[
    (Y_\alpha \otimes I_\psi)w_\psi = [X_{\alpha'}\otimes A,Y_\beta \otimes I_\psi]w_\psi
    \subseteq (X_{\alpha'}\otimes A) (Y_\beta \otimes I_\psi)w_\psi +(Y_\beta \otimes I_\psi)(X_{\alpha'}\otimes A)w_\psi = 0,
  \]
  where the last equality follows from the fact that $(Y_\beta \otimes I_\psi) w_\psi = 0$ by the definition of $I_\psi$ and $(X_{\alpha'} \otimes A) w_\psi = 0$ by the first relation in \eqref{def:local-Weyl}.  This proves \eqref{eq:Ybeta-kill-claim}.

  To prove \eqref{eq:Ybeta-kill-claim} for $\fsl(2,2)$ and $A(1,1)$, we consider $\g=\gl(2,2)$ and we let $\h$ be the subalgebra of diagonal matrices of $\g$. Denote by $\{\epsilon_i \mid i=1,\dotsc, 4\}$ the basis of $\h^*$ dual to $\{E_{i,i}\mid i=1,\ldots, 4\}$. In this case,
  \[
    \Delta_{\bar 0} = \{\pm(\epsilon_1 - \epsilon_2), \pm(\epsilon_3 - \epsilon_4)\},\quad \Delta_{\bar 1} = \{\pm(\epsilon_1-\epsilon_3),\pm(\epsilon_1-\epsilon_4),\pm(\epsilon_2-\epsilon_3), \pm(\epsilon_2-\epsilon_4)\},
  \]
  and $\g_{\epsilon_r-\epsilon_s} = \C E_{r,s}$, for $1 \leq r \neq s \leq 4$. In particular, if we fix $\alpha\in\Sigma_{\bar 1}$ and $\alpha'\in\Delta_{\bar 1}^+$ such that $\beta := \alpha + \alpha' \in \Delta$, then there exist $k,\ell,p,q\in\{1,2,3,4\}$ with $k\neq \ell$ and $p\neq q$, such that $\g_{\alpha'} = \C E_{k,\ell}$ and $\g_{-\beta} = \C E_{p,q}$. Since $\beta \in \Delta_{\bar 0}^+$ and $\alpha' \in \Delta_{\bar 1}^+$,  the definition of $I_\psi$ and the first relation in \eqref{def:local-Weyl} give us that $([E_{k,\ell}, E_{p,q}]\otimes I_\psi)w_\psi=0$. Regarding the $\fsl(2,2)$ case, we choose $Y_{\alpha}=[E_{k,\ell}, E_{p,q}]$. For the $A(1,1)$ case, we choose $Y_{\alpha}$ to be the image of $[E_{k,\ell}, E_{p,q}]$ in $A(1,1)$.  Then $(Y_\alpha\otimes I_\psi)w_\psi=0$.  Since the choice of $\alpha\in\Sigma_{\bar 1}$ was arbitrary, we conclude that $(Y_\alpha \otimes I_\psi)w_\psi=0$ for all roots $\alpha \in \Sigma_{\bar 1}$.

  Now, for $\beta = \sum_{\alpha \in \Sigma}^n m_\alpha \alpha \in \Delta^+$, we define the \emph{height} of $\beta$ to be $\het \beta := \sum_{\alpha \in \Sigma}^n m_\alpha$.  We prove, by induction on the height of $\beta$, that $(Y_{\beta} \otimes I_\psi^{\het \beta}) w_\psi = 0$ for all $\beta \in \Delta^+$.  Since $\g$ is finite dimensional, the heights of elements of $\Delta^+$ are bounded above, and thus the lemma will follow.

  The base case of height one is precisely \eqref{eq:Ybeta-kill-claim}.  Suppose $\beta \in \Delta^+$ with $\het \beta > 1$.  Then there exist $\beta', \beta'' \in \Delta^+$ with $\het \beta', \het \beta'' < \het \beta$ such that $Y_{\beta} \in \C [Y_{\beta'}, Y_{\beta''}]$.  Then
  \[
    (Y_{\beta} \otimes I_\psi^{\het \beta})w_\psi
    = [Y_{\beta'} \otimes I_\psi^{\het \beta'}, Y_{\beta''} \otimes I_\psi^{\het \beta''}] w_\psi
    = 0. \qedhere
  \]
\end{proof}

\begin{cor}\label{ind-step}
  Suppose $\psi \in (\h \otimes A)^*$ with $\psi |_\h \in \Lambda^+$, and let $N_\psi$ be as in Lemma~\ref{fin.dim.odd}.  Then
  \[
    \left( \g\otimes I_\psi^{N_\psi} \right)w_\psi=0.
  \]
\end{cor}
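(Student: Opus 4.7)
The plan is to assemble the conclusion from the three pieces of the triangular decomposition $\g = \n^- \oplus \h \oplus \n^+$, each of which has already been handled (explicitly or implicitly) by the preceding results.

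First, we may assume $N_\psi \geq 1$ (replacing $N_\psi$ by $\max\{N_\psi,1\}$ if necessary, since $I_\psi^M \subseteq I_\psi^{N_\psi}$ for $M \geq N_\psi$ and the conclusion of Lemma~\ref{fin.dim.odd} persists). Then $I_\psi^{N_\psi} \subseteq I_\psi$, and by the very definition of $I_\psi$ we immediately obtain
\[
  (\h \otimes I_\psi^{N_\psi}) w_\psi = 0.
\]

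Next, the first defining relation of $W(\psi)$ in \eqref{relations-local} gives $(\n^+ \otimes A) w_\psi = 0$, and therefore in particular
\[
  (\n^+ \otimes I_\psi^{N_\psi}) w_\psi = 0.
\]
Finally, Lemma~\ref{fin.dim.odd} provides
\[
  (\n^- \otimes I_\psi^{N_\psi}) w_\psi = 0.
\]
Summing the three containments and using the triangular decomposition $\g = \n^- \oplus \h \oplus \n^+$ yields $(\g \otimes I_\psi^{N_\psi}) w_\psi = 0$, as claimed.

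There is no real obstacle here: all the work went into Lemma~\ref{fin.dim.odd} (handling the odd part of $\n^-$ via the key property \eqref{eq:system-key-propery} and the bracket formula \eqref{eq:rmk-on-A(1,1)}), while the $\h$ and $\n^+$ contributions are built into the defining relations of $W(\psi)$ and the definition of $I_\psi$. The corollary is essentially a bookkeeping step that packages Lemma~\ref{fin.dim.odd} into a form usable in the subsequent finite-dimensionality and tensor-product arguments.
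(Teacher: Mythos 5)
Your proof is correct and follows the same route as the paper: split $\g = \n^- \oplus \h \oplus \n^+$, use the first relation of \eqref{relations-local} for $\n^+$, the definition of $I_\psi$ for $\h$ (with $I_\psi^{N_\psi} \subseteq I_\psi$), and Lemma~\ref{fin.dim.odd} for $\n^-$. The small remark about ensuring $N_\psi \geq 1$ is a harmless refinement the paper leaves implicit.
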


\begin{proof}
  It follows from the first relation in \eqref{def:local-Weyl} that $(\n^+\otimes I_\psi^{N_\psi})w_\psi=0$.  Since  $(\h\otimes I_\psi)w_\psi=0$ by Remark~\ref{rem:ann-even-root}, we have $ (\h\otimes I_\psi^{N_\psi})w_\psi=0$.   Finally Lemma~\ref{fin.dim.odd} implies that $(\n^-\otimes I_\psi^{N_\psi})w_\psi=0$.
\end{proof}

\begin{lemma} \label{lem:Wlambda-finite-weights}
  For all $\psi \in (\h \otimes A)^*$ with $\psi|_\h \in \Lambda^+$, the set of $\g$-weights (equivalently, $\g_{\bar 0}$-weights) of $W(\psi)$ is finite.
\end{lemma}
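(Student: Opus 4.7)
The plan is to reduce the problem to a finite-dimensional setting via Corollary~\ref{ind-step} and then apply the classical finite-dimensionality of local Weyl modules for reductive Lie algebras, combined with the PBW theorem for Lie superalgebras.

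First, I would upgrade Corollary~\ref{ind-step} to the statement that $\g\otimes I_\psi^{N_\psi}$ annihilates all of $W(\psi)$, not just $w_\psi$. Since $I_\psi^{N_\psi}$ is an ideal of $A$, the subspace $\g\otimes I_\psi^{N_\psi}$ is a Lie superideal of $\g\otimes A$, and a routine induction on PBW degree (any superideal of $\g\otimes A$ killing a cyclic generator kills the whole module) gives the claim. Therefore $W(\psi)$ factors through an action of the finite-dimensional Lie superalgebra $\g\otimes B$, where $B:=A/I_\psi^{N_\psi}$ is finite-dimensional by Remark~\ref{rem:ann-even-root}, and $\psi$ descends to some $\bar\psi\in(\h\otimes B)^*$.

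Next, I would examine the $\g_{\bar 0}$-submodule $W_0:=U(\g_{\bar 0}\otimes B)w_\psi\subseteq W(\psi)$. Restricting the defining relations~\eqref{relations-local}, the vector $w_\psi\in W_0$ satisfies $(\n^+\cap\g_{\bar 0}\otimes B)w_\psi=0$, $xw_\psi=\bar\psi(x)w_\psi$ for $x\in\h\otimes B$, and $Y_\alpha^{\lambda(H_\alpha)+1}w_\psi=0$ for $\alpha\in\Sigma(\g_{\bar 0})$. These are precisely the defining relations of a local Weyl module for the reductive Lie algebra $\g_{\bar 0}$ over $B$ with highest map-weight $\bar\psi|_{\h\otimes B}$. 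By the classical finite-dimensionality theorem for local Weyl modules in the non-super case (see, e.g., \cite{CFK10}), applied factor-by-factor to the simple ideals of $[\g_{\bar 0},\g_{\bar 0}]$ and by scalars on the center, $W_0$ is finite-dimensional.

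Finally, I would invoke the PBW theorem for Lie superalgebras using an ordering that places the basis of $\g_{\bar 1}\otimes B$ before the basis of $\g_{\bar 0}\otimes B$. This yields a vector-space identification $U(\g\otimes B)\cong\Lambda(\g_{\bar 1}\otimes B)\otimes U(\g_{\bar 0}\otimes B)$, where the first factor is identified with the span of ordered monomials in an odd basis. Applying this to $w_\psi$ gives
\[
W(\psi)=U(\g\otimes B)w_\psi=\Lambda(\g_{\bar 1}\otimes B)\cdot W_0.
\]
Since $\g_{\bar 1}\otimes B$ is finite-dimensional, so is $\Lambda(\g_{\bar 1}\otimes B)$; hence $W(\psi)$ is finite-dimensional, and a fortiori its weight set is finite. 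The main obstacle, if a self-contained argument is preferred, is the citation to the classical finite-dimensionality of $W_0$; this could instead be obtained by iterating Proposition~\ref{fin-dim-even} by height over the even positive roots, but citing the non-super result keeps the argument short.
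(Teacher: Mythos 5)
Your argument is correct, but it is a genuinely different (and heavier) route than the one the paper takes. The paper's proof is two lines of pure weight combinatorics: $W(\psi)$ is a quotient of the global Weyl module, so its weights lie in $\lambda - Q^+$ and, since it lies in $\I(\g\otimes A,\g_{\bar 0})$, its weight set is invariant under the finite Weyl group of $\g_{\bar 0}$; as only finitely many dominant integral weights lie in $\lambda - Q^+$, the weight set is finite. You instead prove the much stronger statement that $W(\psi)$ is finite-dimensional, by upgrading Corollary~\ref{ind-step} to annihilation of the whole module (correct: $\g\otimes I_\psi^{N_\psi}$ is a Lie superideal killing the cyclic generator), reducing to $\g\otimes B$ with $B$ finite-dimensional, quoting the classical non-super finite-dimensionality theorem for $W_0 = U(\g_{\bar 0}\otimes B)w_\psi$, and finishing with the super PBW factorization $U(\g\otimes B)=\Lambda(\g_{\bar 1}\otimes B)\cdot U(\g_{\bar 0}\otimes B)$ (the same device used in Proposition~\ref{prop:Vbar-fd}). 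There is no circularity, since Lemma~\ref{fin.dim.odd} and Corollary~\ref{ind-step} do not depend on this lemma; but note that your argument essentially subsumes Theorem~\ref{thm:fin-dim-loc-weyl} and inverts the paper's logical order (the paper deduces finite-dimensionality \emph{from} finiteness of the weight set, precisely so as to avoid importing the non-super theorem). What your approach buys is a proof that leans on known results and makes the reduction to the classical case explicit; what the paper's buys is self-containedness and brevity. One caveat on your closing remark: merely iterating Proposition~\ref{fin-dim-even} over even positive roots controls single applications of $Y_\alpha\otimes A$ to $w_\psi$, not arbitrary PBW monomials in $\n_{\bar 0}^-\otimes A$, so it would not by itself yield finite-dimensionality of $W_0$ without an additional stabilization argument of the kind used in the proof of Theorem~\ref{thm:fin-dim-loc-weyl} --- which in turn is exactly where the present lemma is needed.
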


\begin{proof}
  Since the weights of $W(\lambda)$ are contained in $\lambda - Q^+$, a finite number of weights of $W(\lambda)$ are dominant integral.  Since $W(\lambda)$ is a direct sum of $\g_{\bar 0}$-modules, its weights are invariant under the (finite) Weyl group of $\g_{\bar 0}$.  The result follows.
\end{proof}

\begin{thm} \label{thm:fin-dim-loc-weyl}
  Assume that $A$ is finitely generated and the system of simple roots $\Sigma$ satisfies \eqref{eq:system-key-propery}.  Then the local Weyl module $W(\psi)$ is finite dimensional for all $\psi\in (\h\otimes A)^*$ such that $\psi |_{\h} \in \Lambda^+$.
\end{thm}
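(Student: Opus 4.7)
The plan is to use Corollary~\ref{ind-step} to reduce the problem to a finite-dimensional Lie superalgebra, then combine the PBW theorem with the weight finiteness from Lemma~\ref{lem:Wlambda-finite-weights}.

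First, I would upgrade Corollary~\ref{ind-step} from the pointwise statement $(\g \otimes I_\psi^{N_\psi}) w_\psi = 0$ to the global statement $(\g \otimes I_\psi^{N_\psi}) W(\psi) = 0$. Since $I_\psi^{N_\psi}$ is an ideal of $A$, the subspace $\g \otimes I_\psi^{N_\psi}$ is a graded Lie ideal of $\g \otimes A$; a routine induction on PBW-degree using the super bracket relations gives $U(\g \otimes A) \cdot (\g \otimes I_\psi^{N_\psi}) \subseteq (\g \otimes I_\psi^{N_\psi}) \cdot U(\g \otimes A)$. Applying this inclusion to $w_\psi$ and invoking Corollary~\ref{ind-step} yields the desired annihilation.

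Set $B := A / I_\psi^{N_\psi}$. By Remark~\ref{rem:ann-even-root}, $B$ is finite-dimensional, hence $\g \otimes B$ is a finite-dimensional Lie superalgebra and the action of $\g \otimes A$ on $W(\psi)$ factors through $\g \otimes B$. The relations $(\n^+ \otimes A) w_\psi = 0$ and the diagonal action of $\h \otimes A$, together with the PBW theorem, then give the cyclic description $W(\psi) = U(\n^- \otimes B) w_\psi$.

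To conclude, I would fix a homogeneous basis of $\n^- \otimes B$ in which each basis vector lies in some $\g_\beta \otimes B$ with $\beta \in \Delta^-(\Sigma)$. By PBW for Lie superalgebras, $W(\psi)$ is spanned by ordered monomials in these basis vectors acting on $w_\psi$, with exponents in $\N$ for vectors of even weight and in $\{0,1\}$ for those of odd weight. Any such monomial produces a vector of weight $\lambda + \sum_\beta c_\beta \beta$, where $c_\beta$ is the total exponent on basis vectors of weight $\beta$. Lemma~\ref{lem:Wlambda-finite-weights} bounds the set of weights of $W(\psi)$, forcing only finitely many tuples $(c_\beta)_\beta$ to occur; for each such tuple, distributing the exponents among the finitely many basis vectors of each $\g_\beta \otimes B$ yields only finitely many monomials. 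Hence $W(\psi)$ has a finite spanning set. I expect the only subtle point to be the initial upgrade from pointwise to global annihilation, which reduces to the standard identity $\mathfrak{k}\, U(\mathfrak{l}) = U(\mathfrak{l})\, \mathfrak{k}$ for a graded Lie ideal $\mathfrak{k}$ of a Lie superalgebra $\mathfrak{l}$; the remainder is PBW-plus-weight-counting bookkeeping.
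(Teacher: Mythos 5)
Your proposal is correct and follows essentially the same route as the paper: reduce to $W(\psi)=U(\n^-\otimes A/I_\psi^{N_\psi})w_\psi$ using the annihilation of $w_\psi$ by $\n^-\otimes I_\psi^{N_\psi}$ (Lemma~\ref{fin.dim.odd}/Corollary~\ref{ind-step}) together with the fact that $\g\otimes I_\psi^{N_\psi}$ is an ideal, and then invoke finite-dimensionality of $A/I_\psi^{N_\psi}$ plus the weight-finiteness of Lemma~\ref{lem:Wlambda-finite-weights} to bound the PBW monomials. The paper phrases the last step via the standard filtration $U_n$ rather than counting exponent tuples, but this is the same argument.
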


\begin{proof}
  By Definition~\ref{def:local-Weyl}, we have $W(\psi)=U(\n^-\otimes A)w_\psi$.  By Lemma~\ref{fin.dim.odd}, we have $( \n^-\otimes I_\psi^{N_\psi} )w_\psi=0$.  Thus $W(\psi) = U ( \n^-\otimes A/I_\psi^{N_\psi} ) w_\psi$.  By Lemma~\ref{lem:Wlambda-finite-weights}, there exists $N \in \N$ such that
  \[
    W(\psi) = U_n \left( \n^- \otimes A/I_\psi^{N_{\psi}} \right) w_\psi, \quad \text{for all } n\geq N,
  \]
  where $U(\mathfrak{a}) = \sum_{n=0}^\infty U_n(\mathfrak{a})$ is the usual filtration on the universal enveloping algebra of a Lie superalgebra $\mathfrak{a}$ induced from the natural grading on the tensor algebra.  Since the Lie superalgebra $\n^-\otimes A/I_\psi^{N_\psi}$ is finite dimensional (see Remark~\ref{rem:ann-even-root}), $W(\psi)$ is also finite dimensional.
\end{proof}

In the non-super setting, Theorem~\ref{thm:fin-dim-loc-weyl} was proved in \cite[Th.~1]{CP01} for $A = \C[t,t^{-1}]$, and in \cite[Th.~1]{FL04} for $A$ the algebra of functions on a complex affine variety.

\begin{prop} \label{prop:local-Weyl-universal}
  Let $\psi\in \cL (\h\otimes A)$ be such that $\psi|_{\h}=\lambda\in \Lambda^+$. Then the local Weyl module $W(\psi)$ is the unique (up to isomorphism) finite-dimensional object of $\I(\g\otimes A,\g_{\bar 0})$ that is generated by a highest map-weight vector of map-weight $\psi$ and admits a surjective homomorphism to any finite-dimensional object of $\I(\g\otimes A,\g_{\bar 0})$ also generated by a highest map-weight vector of map-weight $\psi$.
\end{prop}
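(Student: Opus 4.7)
The plan is to first establish that $W(\psi)$ satisfies the stated universal property, and then to use this, together with the finite-dimensionality of $W(\psi)$, to deduce uniqueness by a two-way-surjection argument.

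To begin, I would verify that $W(\psi)$ itself lies in the relevant subcategory: it is finite-dimensional by Theorem~\ref{thm:fin-dim-loc-weyl}, and since its defining relations \eqref{relations-local} imply those of \eqref{cha-fou-equ5}, it is a quotient of the global Weyl module $W(\lambda)$, which belongs to $\I(\g\otimes A,\g_{\bar 0})$ by Lemma~\ref{Fou14-lem3.4} applied to $\bar V(\lambda)$ (finite-dimensional by Proposition~\ref{prop:Vbar-fd}). By construction, $w_\psi$ is a highest map-weight vector of map-weight $\psi$ generating $W(\psi)$.

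Next, I would show that $W(\psi)$ surjects onto every finite-dimensional $V \in \I(\g\otimes A,\g_{\bar 0})$ generated by a highest map-weight vector $v$ of map-weight $\psi$. The first two relations of \eqref{relations-local} hold on $v$ by the definition of a highest map-weight vector. For the third, since $V$ restricted to $\g_{\bar 0}$ lies in $\I$, the vector $v$ is contained in a finite-dimensional $\g_{\bar 0}$-submodule, so the $\fsl(2)$-triple $\{X_\alpha,Y_\alpha,H_\alpha\}$ for $\alpha \in \Sigma(\g_{\bar 0})$ acts locally finitely on $v$; combined with $X_\alpha v=0$ and $H_\alpha v=\lambda(H_\alpha)v$, standard $\fsl(2)$-theory forces $Y_\alpha^{\lambda(H_\alpha)+1}v=0$. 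Thus $w_\psi\mapsto v$ extends to a surjective $\g\otimes A$-homomorphism $W(\psi)\twoheadrightarrow V$.

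For uniqueness, the key observation is that in any $\g\otimes A$-module generated by a highest map-weight vector $u$ of map-weight $\psi$, the $\h$-weight space of weight $\lambda$ is spanned by $u$. Indeed, by the PBW theorem together with $(\n^+\otimes A)u=0$ and $(\h\otimes A)u\subseteq\C u$, the module equals $U(\n^-\otimes A)u$, and every nontrivial ordered monomial in $\n^-\otimes A$ strictly lowers the $\h$-weight. Now suppose $W'$ is another finite-dimensional object of $\I(\g\otimes A,\g_{\bar 0})$ enjoying the stated universal property, generated by some highest map-weight vector $w'$ of map-weight $\psi$. Applying the universal property of $W'$ to $W(\psi)$ yields a surjection $\pi_1 \colon W' \twoheadrightarrow W(\psi)$, while the universal property of $W(\psi)$ just established yields a surjection $\pi_2 \colon W(\psi)\twoheadrightarrow W'$. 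The composition $\pi_1\circ\pi_2$ is a surjective endomorphism of the finite-dimensional module $W(\psi)$, hence an isomorphism; by the weight-space observation it sends $w_\psi$ to a nonzero scalar multiple of itself, so after rescaling $\pi_2$ becomes a two-sided inverse of $\pi_1$, giving $W'\cong W(\psi)$.

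The main subtlety I anticipate is the verification of the third defining relation $Y_\alpha^{\lambda(H_\alpha)+1}v=0$ on $v\in V$, which crucially uses that $V$ lies in $\I(\g\otimes A,\g_{\bar 0})$ so that $v$ generates a finite-dimensional $\g_{\bar 0}$-submodule; the rest of the argument proceeds by routine PBW considerations.
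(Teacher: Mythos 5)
Your proposal is correct and follows essentially the same route as the paper: the surjection onto any finite-dimensional highest map-weight object is obtained by checking the defining relations \eqref{relations-local} on the generator (the third one via finite-dimensionality of the $\g_{\bar 0}$-submodule generated by $v$), and uniqueness follows from mutual surjectivity between finite-dimensional modules. Your additional details (the one-dimensionality of the $\lambda$-weight space and the rescaling to get mutually inverse maps) merely spell out what the paper leaves implicit.
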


\begin{proof}
  Let $V$ be a finite-dimensional object of $\I(\g\otimes A,\g_{\bar 0})$ that is generated by a highest map-weight vector $v$ of map-weight $\psi$.  It follows immediately from the definition of a highest map-weight $\g\otimes A$-module that the two first relations in \eqref{relations-local} are satisfied by $v$. Since the $\g_{\bar 0}$-module generated by $v$ must be finite dimensional, we have also that $Y_\alpha^{\lambda(H_\alpha)+1}v=0$, for all $\alpha \in \Sigma(\g_{\bar 0})$.  Therefore, there exists a surjective homomorphism $W(\psi) \to V$ sending $w_\psi$ to $v$.

  To show that $W(\psi)$ is the unique representation with the given property, suppose that $W$ is another module with this property.  Then $W$ is a quotient of $W(\psi)$ and vice-versa.  Since both modules are finite dimensional, it follows that $W(\psi) \cong W$.
\end{proof}

\begin{cor}
  Let $\psi\in \cL (\h\otimes A)$ such that $\psi|_{\h}=\lambda\in \Lambda^+$. Then the local Weyl module $W(\psi)$ is the maximal finite-dimensional quotient of the global Weyl module $W(\lambda)$ that is a highest map-weight module of highest map-weight $\psi$.
\end{cor}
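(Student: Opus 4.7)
The plan is to establish two things: (i) $W(\psi)$ itself is a finite-dimensional quotient of $W(\lambda)$ which is a highest map-weight module of highest map-weight $\psi$, and (ii) any other such quotient factors through $W(\psi)$.

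For (i), I would first construct the surjection $W(\lambda) \twoheadrightarrow W(\psi)$. By Proposition~\ref{prop:gen-rel-global}, it suffices to verify that $w_\psi$ satisfies the three defining relations of $W(\lambda)$. The first and third relations are immediate from \eqref{relations-local}, while the second follows because the hypothesis $\psi|_{\h} = \lambda$ gives $h w_\psi = \psi(h) w_\psi = \lambda(h) w_\psi$ for all $h \in \h$. So there is an epimorphism $W(\lambda) \twoheadrightarrow W(\psi)$ sending $w_\lambda \mapsto w_\psi$. That $W(\psi)$ is finite-dimensional is exactly Theorem~\ref{thm:fin-dim-loc-weyl}, and by construction $w_\psi$ is a highest map-weight vector of map-weight $\psi$ generating $W(\psi)$.

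For (ii), let $V$ be any finite-dimensional quotient of $W(\lambda)$ which is a highest map-weight module of highest map-weight $\psi$, and denote by $v$ the image of $w_\lambda$ in $V$. Since $W(\lambda)$ is a quotient of $P_A(\bar V(\lambda)) \in \I(\g\otimes A,\g_{\bar 0})$ and the category $\I(\g\otimes A,\g_{\bar 0})$ is closed under quotients, $V \in \I(\g\otimes A,\g_{\bar 0})$. By Proposition~\ref{prop:local-Weyl-universal}, $W(\psi)$ surjects onto any finite-dimensional object of $\I(\g\otimes A,\g_{\bar 0})$ generated by a highest map-weight vector of map-weight $\psi$; applying this to $V$ yields the required surjection $W(\psi) \twoheadrightarrow V$, showing that $V$ is a further quotient of $W(\psi)$.

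The only subtle point in the argument is the verification that $V$ lies in $\I(\g\otimes A,\g_{\bar 0})$ so that Proposition~\ref{prop:local-Weyl-universal} applies; this is immediate once one observes that quotients of modules in $\I$ remain in $\I$, since direct sums of finite-dimensional irreducible $\g_{\bar 0}$-modules form a semisimple (and hence quotient-closed) class. No further calculation is required.
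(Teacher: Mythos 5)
Your proof is correct and follows exactly the route the paper intends: the corollary is stated without proof as an immediate consequence of Proposition~\ref{prop:gen-rel-global} (giving the surjection $W(\lambda)\twoheadrightarrow W(\psi)$), Theorem~\ref{thm:fin-dim-loc-weyl} (finite-dimensionality), and the universal property of Proposition~\ref{prop:local-Weyl-universal}. Your explicit check that any finite-dimensional quotient of $W(\lambda)$ lies in $\I(\g\otimes A,\g_{\bar 0})$, so that the universal property applies, is the right point to make precise.
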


By \cite[Th.~4.16]{Sav14}, any irreducible finite-dimensional $\g\otimes A$-module is a highest map-weight module, for some $\psi\in \cL  (\h\otimes A)$ with $\psi |_\h\in \Lambda^+$. Then, by Proposition~\ref{prop:local-Weyl-universal}, there exists a surjective homomorphism from the local Weyl module $W(\psi)$ to such an irreducible module.  In other words, all irreducible finite-dimensional $\g \otimes A$-modules are quotients of local Weyl modules.

We conclude by showing that the local Weyl modules possess a tensor product property analogous to the one satisfied in the non-super setting (see, \cite[Th.~2]{CP01} and \cite[Th.~2]{FL04}).

\begin{thm} \label{thm:tensor-property}
  Assume that $A$ is finitely generated and the system of simple roots $\Sigma$ satisfies \eqref{eq:system-key-propery}.  For $i=1,2$, let $\psi_i\in \cL (\h\otimes A)$ with $\lambda_i=\psi_i |_\h\in \Lambda^+$, and suppose that $I_{\psi_1}$ and $I_{\psi_2}$ have disjoint support. Then
  \[
    W(\psi_1+\psi_2)\cong W(\psi_1)\otimes W(\psi_2)
  \]
  as $\g\otimes A$-modules.
\end{thm}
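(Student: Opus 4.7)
The plan is to exploit the disjoint supports via a Chinese Remainder decomposition, then establish the isomorphism by a dimension count. First observe that $I_{\psi_1} \cap I_{\psi_2} \subseteq I_{\psi_1+\psi_2}$ (both $\psi_1$ and $\psi_2$ vanish on $\h$ tensored with the intersection), and that $I_{\psi_1}^M \cap I_{\psi_2}^M = I_{\psi_1}^M I_{\psi_2}^M$ by Lemma~\ref{lem:assoc-alg-facts}(a),(c) using that disjoint supports persist under taking powers. An adaptation of the proof of Lemma~\ref{fin.dim.odd} (applied to the ideal $I_{\psi_1} \cap I_{\psi_2}$ in place of $I_\psi$) shows that $\g \otimes I_{\psi_1}^M I_{\psi_2}^M$ annihilates $W(\psi_1+\psi_2)$ for $M$ sufficiently large. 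Choose such $M$ also exceeding $N_{\psi_1}$ and $N_{\psi_2}$, so that all three local Weyl modules factor through
\[
  \g \otimes A/(I_{\psi_1}^M I_{\psi_2}^M) \cong \g_1 \oplus \g_2, \quad \text{where } \g_i := \g \otimes A/I_{\psi_i}^M,
\]
via the Chinese Remainder Theorem.

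With this setup in hand, I would verify that the coproduct $\g \otimes A$-action on $W(\psi_1) \otimes W(\psi_2)$ coincides with the outer tensor product action of $\g_1 \oplus \g_2$. Since $\g_{3-i}$ acts trivially on $W(\psi_i)$ and $w_{\psi_i}$ generates $W(\psi_i)$, it follows that $\hat w := w_{\psi_1} \otimes w_{\psi_2}$ generates the tensor product. A direct calculation using the coproduct shows $\hat w$ is a highest map-weight vector of map-weight $\psi_1+\psi_2$; the tensor product is finite-dimensional by Theorem~\ref{thm:fin-dim-loc-weyl} and completely reducible as a $\g_{\bar 0}$-module (since $\g_{\bar 0}$ is reductive), hence lies in $\I(\g \otimes A, \g_{\bar 0})$. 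Proposition~\ref{prop:local-Weyl-universal} then yields a surjection $\pi \colon W(\psi_1+\psi_2) \twoheadrightarrow W(\psi_1) \otimes W(\psi_2)$, giving $\dim W(\psi_1+\psi_2) \geq \dim W(\psi_1) \cdot \dim W(\psi_2)$.

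For the reverse inequality, the PBW isomorphism $U(\g_1 \oplus \g_2) \cong U(\g_1) \otimes U(\g_2)$ as superalgebras (with super-commuting factors, since $[\g_1, \g_2] = 0$ in the direct sum) gives $W(\psi_1+\psi_2) = U(\g_1) U(\g_2) w_{\psi_1+\psi_2}$. Choose $f_i \in I_{\psi_{3-i}}^M$ with $f_1 + f_2 = 1$ (Lemma~\ref{lem:assoc-alg-facts}(c)), and view $V_i := U(\g_i) w_{\psi_1+\psi_2}$ as a $\g \otimes A$-module via the quotient $\g \otimes A \twoheadrightarrow \g_i$. A direct computation shows $(h \otimes f_i a) w_{\psi_1+\psi_2} = \psi_i(h \otimes a) w_{\psi_1+\psi_2}$ (using $\psi_{3-i}|_{\h \otimes I_{\psi_{3-i}}} = 0$ and $f_{3-i} \in I_{\psi_i}$), so $V_i$ is a finite-dimensional highest map-weight $\g \otimes A$-module of map-weight $\psi_i$. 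Proposition~\ref{prop:local-Weyl-universal} then gives $\dim V_i \leq \dim W(\psi_i)$, and super-commutativity of $U(\g_1)$ and $U(\g_2)$ yields $\dim W(\psi_1+\psi_2) \leq \dim V_1 \cdot \dim V_2 \leq \dim W(\psi_1) \cdot \dim W(\psi_2)$, forcing $\pi$ to be an isomorphism. The main technical obstacle is that $V_i$ carries an \emph{alternative} $\g \otimes A$-module structure (via $\g \otimes A \twoheadrightarrow \g_i$) under which the highest map-weight becomes $\psi_i$ rather than $\psi_1+\psi_2$; tracking this structure shift precisely via the idempotents $f_i$ is essential for applying the universality in Proposition~\ref{prop:local-Weyl-universal}.
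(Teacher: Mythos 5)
Your argument is correct in outline, and while the first half (the surjection $W(\psi_1+\psi_2)\twoheadrightarrow W(\psi_1)\otimes W(\psi_2)$ via the Chinese Remainder decomposition and Proposition~\ref{prop:local-Weyl-universal}) coincides with the paper's, your second half is genuinely different. The paper bounds $W(\psi)$ from above by passing to the induced modules $M(\psi)\cong M(\psi_1)\otimes M(\psi_2)$ (which are not finite dimensional) and then truncating by a finite set of weights to produce finite-dimensional quotients $\bar M(\psi_i)$ of $W(\psi_i)$; you instead work inside $W(\psi)$ itself, using the idempotents $f_1+f_2=1$ with $f_i\in I_{\psi_{3-i}}^M$ to exhibit $V_i=U(\g_i)w_\psi$ as a highest map-weight module of map-weight $\psi_i$ for the restructured action, and then conclude by the dimension count $\dim W(\psi)\le\dim V_1\cdot\dim V_2\le\dim W(\psi_1)\cdot\dim W(\psi_2)$. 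Your route avoids the detour through infinite-dimensional induced modules and the weight-truncation argument, at the cost of having to track the module-structure shift carefully; the paper's route keeps every module structure the "obvious" one but needs the finiteness-of-weights lemma to cut $M(\psi_i)$ down to size. The one place you should not gloss over: to invoke Proposition~\ref{prop:local-Weyl-universal} for $V_i$ you must check that $V_i$ (with the action pulled back through $\g\otimes A\twoheadrightarrow\g_i$) actually lies in $\I(\g\otimes A,\g_{\bar 0})$, or equivalently verify the third relation of \eqref{relations-local}, namely $(Y_\alpha\otimes f_i)^{\lambda_i(H_\alpha)+1}w_\psi=0$ for $\alpha\in\Sigma(\g_{\bar 0})$. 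This does hold --- since $f_i^2\equiv f_i$ modulo $I_{\psi_1}^M I_{\psi_2}^M$, the elements $X_\alpha\otimes f_i$, $Y_\alpha\otimes f_i$, $H_\alpha\otimes f_i$ act on $W(\psi)$ as an $\fsl(2)$-triple and $w_\psi$ is a highest weight vector of weight $\lambda_i(H_\alpha)\in\N$ in a finite-dimensional module --- but it is a step that requires this argument, not a formality. Similarly, the inequality $\dim W(\psi)\le\dim V_1\cdot\dim V_2$ should be justified by noting that supercommutativity of $U(\g_1)$ and $U(\g_2)$ makes the multiplication map descend to a surjection $V_1\otimes V_2\to W(\psi)$.
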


\begin{proof}
  By Corollary~\ref{ind-step}, there exist $N_1, N_2 \in \N$ such that $( \g\otimes I_{\psi_i}^{N_i} ) w_{\psi_i}=0$ for $i=1,2$. Then the action of $\g\otimes A$ on $W(\psi_1)\otimes W(\psi_2)$ factors through the composition
  \begin{equation} \label{eq:tensor-factor}
    \g\otimes A \stackrel{d}{\hookrightarrow} (\g\otimes A)\oplus (\g\otimes A)\stackrel{\pi}{\twoheadrightarrow} (\g\otimes A/I_{\psi_1}^{N_1})\oplus (\g\otimes A/I_{\psi_2}^{N_2}),
  \end{equation}
  where $d$ is the diagonal embedding.  Since $\Supp(I_{\psi_1})\cap\Supp(I_{\psi_2}) = \varnothing$, Lemma~\ref{lem:assoc-alg-facts}\eqref{lem-item:support-power} implies that $\Supp(I_{\psi_1}^{N_1}) \cap \Supp(I_{\psi_2}^{N_2})=\varnothing$. Then, by Lemma~\ref{lem:assoc-alg-facts}\eqref{lem-item:product-intersection}, we have $A=I_{\psi_1}^{N_1}+I_{\psi_2}^{N_2}$ and $I_{\psi_1}^{N_1}\cap I_{\psi_2}^{N_2}=I_{\psi_1}^{N_1}  I_{\psi_2}^{N_2}$.  Thus, $A/I_{\psi_1}^{N_1} I_{\psi_2}^{N_2}\cong (A/I_{\psi_1}^{N_1})\oplus (A/I_{\psi_2}^{N_2})$. We therefore have the following commutative diagram:
  \[
    \xymatrix{\g\otimes A \ar@{->>}[d]  \ar@{^{(}->}[r]^-{d} & (\g\otimes A)\oplus (\g\otimes A) \ar@{->>}[d] \\
      \g\otimes A/I_{\psi_1}^{N_1}I_{\psi_2}^{N_2} \ar[r]^-{\cong} & (\g\otimes A/I_{\psi_1}^{N_1})\oplus (\g\otimes A/I_{\psi_2}^{N_2})}
  \]
  It follows that the composition~\eqref{eq:tensor-factor} is surjective.

  Since $W(\psi_1)\otimes W(\psi_2)$ is generated as a $(\g\otimes A/I_{\psi_1}^{N_1})\oplus (\g\otimes A/I_{\psi_2}^{N_2})$-module by the vector $w_{\psi_1} \otimes w_{\psi_2}$, it follows from the above that it is also generated by this vector as a $\g \otimes A$-module.  Moreover, $\h\otimes A$ acts on $w_{\psi_1}\otimes w_{\psi_2}$ via $\psi := \psi_1 + \psi_2$.  Thus $W(\psi_1)\otimes W(\psi_2)$ is a finite-dimensional highest map-weight module of highest map-weight $\psi$.  Therefore, by Proposition~\ref{prop:local-Weyl-universal}, it is a quotient of $W(\psi)$.

  To simplify notation, let $I_1 = I_{\psi_1}$, $I_2 = I_{\psi_2}$ and $N=N_\psi$.  Let $I = I_1 I_2 = I_1 \cap I_2$.  Then $I \subseteq I_\psi$.  Therefore,
  the action of $\bb \otimes A$ on $\C w_\psi$ descends to an action of $\bb\otimes A/I^N$ on $\C w_\psi$.  Consider the induced module
  \[
    M(\psi) := U(\g\otimes A/I^{N})\otimes_{U(\mathfrak{b}\otimes A/I^{N})} \C w_\psi.
  \]
  It follows from Corollary~\ref{ind-step} that $W(\psi)$ is a quotient of $M(\psi)$. On the other hand, it is clear that the one-dimensional $\mathfrak{b}\otimes A$-modules $\C w_\psi$ and $\C w_{\psi_1}\otimes \C w_{\psi_2}$ are isomorphic. Hence,
  \begin{align*}
    M(\psi) & =  U(\g\otimes A/I^N)\otimes_{U(\mathfrak{b}\otimes A/I^N)} \C w_\psi \\
    &\cong U \left( \g\otimes \left( A/I_1^{N}\oplus A/I_2^{N} \right) \right) \otimes_{U(\mathfrak{b} \otimes (A/I_1^{N} \oplus A/I_2^{N}))} \left( \C w_{\psi_1}\otimes \C w_{\psi_2} \right) \\
    &\cong \left( U \left( \g\otimes \left( A/I_1^{N} \right) \right) \otimes U \left( \g \otimes \left( A/I_2^{N} \right) \right) \right) \otimes_{U(\mathfrak{b}\otimes (A/I_1^{N}))\otimes U (\mathfrak{b}\otimes (A/I_2^{N}))} \left( \C w_{\psi_1}\otimes \C w_{\psi_2} \right) \\
    &\cong \left( U \left( \g\otimes \left( A/I_1^{N} \right) \right) \otimes_{U(\mathfrak{b} \otimes (A/I_1^{N}))} \C w_{\psi_1} \right) \otimes  \left( U\left( \g\otimes \left( A/I_2^{N} \right) \right) \otimes_{U(\mathfrak{b} \otimes (A/I_2^{N}))} \C w_{\psi_2} \right)  \\
    &=  M(\psi_1)\otimes M(\psi_2).
  \end{align*}
  So $W(\psi)$ is a quotient of $M(\psi_1)\otimes M(\psi_2)$.  Fix a surjection $\theta \colon M(\psi_1) \otimes M(\psi_2) \to W(\psi)$.

  We claim that the image of $M(\psi_1)_\mu \otimes M(\psi_2)_\nu$ under $\theta$ is zero except for a finite number of weights $\mu$ and $\nu$.  By Lemma~\ref{lem:Wlambda-finite-weights}, the set $D$ of weights occurring in $W(\psi)$ is finite.  Thus, the sets
  \[
    D_1 = (\lambda_1-Q^+) \cap (-\lambda_2+D+Q^+) \quad \text{and} \quad D_2=(\lambda_2-Q^+) \cap (-\lambda_1+D+Q^+)
  \]
  are also finite.  Since, for $i=1,2$, the weights of $M(\psi_i)$ are contained in $\lambda_i - Q^+$, the image of  $M(\psi_1)_\mu \otimes M(\psi_2)_{\nu}$ under $\theta$ is zero unless $\mu \in \lambda_1 - Q^+$, $\nu \in \lambda_2 - Q^+$ and $\mu + \nu \in D$.  Thus it is nonzero only if $\mu \in D_1$ and $\nu\in D_2$, and hence the claim is proved.

  For $i=1,2$, let $M(\psi_i)'$ be the submodule of $M(\psi_i)$ generated by the weight subspaces $M(\psi_i)_\mu$ with $\mu \notin D_i$, and let $\bar M(\psi_i)=M(\psi_i) / M(\psi_i)'$.  Then $W(\psi)$ is a quotient of $\bar M(\psi_1) \otimes \bar M(\psi_2)$.  Because $I_i$ has finite codimension and there are only a finite number of weights occurring in the quotient $\bar M(\psi_i)$, this module is a finite-dimensional highest map-weight module of highest map-weight $\psi_i$.
  \details{
    $\bar M(\psi_i)$ is a highest map-weight module of map-weight $\psi_i$ because it is a quotient of $M(\psi_i)$.  The proof that $\bar M(\psi_i)$ is finite dimensional is similar to the proof of Theorem~\ref{thm:fin-dim-loc-weyl}.
  }
  Then, by Proposition~\ref{prop:local-Weyl-universal}, it is a quotient of $W(\psi_i)$. Thus, $\bar M(\psi_1) \otimes \bar M(\psi_2)$ is a quotient of $W(\psi_1) \otimes W(\psi_2)$, which implies that $W(\psi)$ is a quotient of $W(\psi_1) \otimes W(\psi_2)$. Since the modules $W(\psi)$ and $W(\psi_1) \otimes W(\psi_2)$ are both finite dimensional, the fact that one is a quotient of the other implies the isomorphism in the statement of the theorem.
\end{proof}


\bibliographystyle{alphaurl}

\bibliography{bibliography}

\end{document}